\newtheorem{thm}{Theorem}
\newtheorem{lem}{Lemma}
\newtheorem{prop}{Proposition}
\theoremstyle{definition}
\newtheorem{defn}{Definition}
\theoremstyle{remark}
\numberwithin{equation}{section} \numberwithin{lem}{section}
\numberwithin{thm}{section} \numberwithin{prop}{section}
\numberwithin{cor}{section} \numberwithin{rem}{section}
\numberwithin{defn}{section}
\numberwithin{assume}{section}
\title[Global existence of solutions]{Global existence of martingale solutions to stochastic keller-segel system with degenerate diffusion}
\author{Jinhuan Wang$^{\,1}$, Qian Li$^{\,1}$, and Hui Huang$^{\,2}$}
\thanks{The work of J. Wang is patially supported by National Natural Science Foundation of China Grants No. 12171218, LiaoNing Revitalization Talents Program Grant No. XLYC2007022.}
\thanks{Corresponding author: Hui Huang}
\begin{document}

\maketitle
\begin{center}
{\footnotesize
1-School of Mathematics, Liaoning University, Shenyang, 110036, P. R. China \\
email:  wjh800415@163.com; liqian990720@163.com\\
 \smallskip
2-Department of Mathematics and Scientific Computing, Karl-Franzens-Universit\"{a}t Graz, 8010 Graz, Austria
\\
email: hui.huang@uni-graz.at
}
	\end{center}
\maketitle
 \date{}     
\begin{abstract}
In this paper, we study the stochastic degenerate Keller–Segel system perturbed by linear multiplicative noise in a bounded domain $\mathcal{O}$. We establish the global existence of martingale solutions for this model with any nonnegative initial data in $H_{2}^{-1}(\mathcal{O})$. The main challenge in proving the existence of solutions arises from the degeneracy of the porous media diffusion and the lack of coercivity in the nonlinear chemotactic term. To overcome these difficulties, we construct a solution operator and apply the Schauder fixed point theorem within the variational framework.
\end{abstract}

{\small {\bf Keywords:} Chemotaxis, Aggregation-diffusion equation, Stochastic keller-segel system, Martingale solutions, Global existence, Wiener process }
\section{Introduction}
In this paper, we study the following stochastic  Keller-Segel equations with degenerate diffusion
\begin{eqnarray}\label{equ}
\begin{cases}	
	d\rho  = [\Delta {(|\rho| ^{m-1}\rho)} - \chi \nabla  \cdot (\rho \nabla\Phi*\rho) ]dt + \sigma \rho dW(t),&x\in\mathcal{O},t\ge 0,\\	\rho (x,0) = \rho_{0},&x\in\mathcal{O},t= 0,
 \end{cases} 
\end{eqnarray}
where the diffusion exponent $m \geq 3$, the constant $\sigma > 0$, and $\mathcal{O}$ is a bounded domain in $\mathbb{R}^d$, with $d = 2,3$, and a smooth boundary $\partial\mathcal{O}$.
Here, $\rho(x,t)$ represents the bacterial density, while the interaction potential $\Phi: \mathbb{R}^d \to \mathbb{R}$ is either the Newtonian or Bessel potential. The term $-\rho \nabla\Phi * \rho$ represents the chemotactic flux, which describes the transport of bacteria in the direction of the chemical gradient. This essentially produces $- \chi \nabla  \cdotp (\rho \nabla\Phi*\rho )$, illustrating cross-diffusive effects in the model, where the positive constant $\chi$ represents the chemotactic sensitivity.
\par
The linear operator $\Delta$ denotes the Laplacian with Dirichlet (or Neumann) boundary condition.  It is well
known that $-\Delta$ is self-adjoint positive and anti-compact. We assume that $\left \{ e _{k}  \right \}_{k\in \mathbb{N}}$ is the orthonormal basis in $L^2{(\mathcal{O})}$ composed of eigenfunctions of $-\Delta$. We denote by $\left \{ \lambda _{k}  \right \}_{k\in \mathbb{N}}$ the corresponding
sequence of eigenvalues\begin{align*}
-\Delta e _{k}=\lambda _{k} e _{k}\text{,\quad}k\in \mathbb{N}.
\end{align*}  We shall consider a cylindrical Wiener process  $W(t)$ on $L^2{(\mathcal{O})}$  of the form 
\begin{align*}
W(t)=\sum_{k=1}^{\infty }   e_{k} \beta _{k}\left( t\right),~~t \ge0,
\end{align*}
 where $\left \{ \beta _{k}  \right \}$  is a sequence of mutually independent standard Brownian motions on a right-continuous complete  filtration probability space $(\Omega, \mathcal{F}, \left \{\mathcal{F}_{t} \right \} _{t\ge 0}, \mathbb{P}  )$  and $\rho dW(t)$ is a linear multiplicative  noise.
 
 Since the bacteria density $\rho (x,t)$  has to be non-negative, the initial condition  $\rho_{0}$ has to be non-negative. Then the model possesses  the  initial data $\rho_0 \in L^{2}(\Omega ,\mathcal{F}_{0} ,H_{2}^{-1}(\mathcal{O} ))$, which is a random variable over $(\Omega ,\mathcal{F},\left \{\mathcal{F}_{t} \right \} _{t\ge 0}  ,\mathbb{P}  )$ and satisfies
(i) $\rho_{0}\ge0$;
(ii) $\rho_{0}$ is $\mathcal{F}_0$–measurable;
(iii) $\mathbb{E}\left [\|\rho_{0}\|_{L^{m+1}}^{m+1}\right]<\infty$.

 Chemotaxis refers to the directed movement of cells or organisms in response to a chemical gradient. A variety of motion cell types exhibit the phenomenon of chemotaxis. The theoretical and mathematical model of chemotaxis can be traced back to the research of Patlak  \cite{patlak1953random}
   in the 1950s and Keller and Segel \cite{1970Initiation} in the 1970s. Let us consider the following classical degenerate Keller-Segel system with a nonlinear diffusion 
   \begin{align}\label{equ2}
   	d\rho  = [\Delta {\rho ^m} - \chi \nabla  \cdot (\rho \nabla \Phi*\rho)]dt, ~x\in\mathbb{R}^d,t> 0. 
   \end{align}
   The equation (\ref{equ2}) can be obtained as the continuum limit of a many-particle system  \cite{2006An,1990Large} and has various applications in physics and biology, depending on the selected interaction kernel and diffusion parameter $m \ge1$. These applications include self-organization in chemotactic movement \cite{2008Infinite,1970Initiation,2006Transport}, biological swarms \cite{2013Stationary, 2006A} and cancer invasion \cite{2014Mathematical,gerisch2008mathematical}. When the interaction kernel $\Phi$ is a Newtonian or Bessel potential, the equation (\ref{equ2}) is well known as the Keller–Segel chemotaxis model, for which numerous significant findings have been reported. A key characteristic of the Keller–Segel model is the  critical mass  for the existence of solutions. When $d=2$ and $m=1$, the equation (\ref{equ2}) indicates the linear random motion in \cite{ 2008Infinite,blanchet2006two,dolbeault2004optimal}, the solution of this model has a critical mass of $8\pi$. When $d\ge 3$ and $m>1$, the behavior of solutions were studied in \cite{bian2013dynamic,blanchet2009critical,chen2012multidimensional,chen2014exact,luckhaus2006large,sugiyama2006global,sugiyama2007time,sugiyama2006globale2}. In particular, when $d\ge3$ and $m=2-\frac{2}{d}\in(1,2)$, Blanchet et~al. \cite{blanchet2009critical} proved that there is a critical mass such that the solution to (\ref{equ2}) may blow up in finite time for super-critical mass and exist globally for sub-critical mass. For the super-critical case $m>2-\frac{2}{d}$, the existence and uniqueness of the solution to
   (\ref{equ2}) holds for any initial data; for subcritical case $m<2-\frac{2}{d}$, one observers either global existence
   or blow-up for different initial data. Another critical exponent $m=\frac{2d}{d+2}$, Chen et~al. \cite{chen2012multidimensional,chen2014exact} used  the properties of the free energy and stationary solutions to obtain an exact criterion for global existence and blow-up of solutions.

 The macroscopic Patlak–Keller–Segel model is based on the limiting behavior of the microscopic model see \cite{stevens2000derivation}. Nevertheless, in the process of deriving the macroscopic equations mentioned above, variations from the average value are overlooked. Additionally, in natural systems, random disturbances and  environmental noise are unavoidable. Meanwhile,  random disturbances and the nonlinearity term can lead to alterations in dynamic behavior. Therefore, the model needs to incorporate random spatio-temporal influences. Introducing a Gaussian random field to the system restores its natural randomness. This makes us to explore the traditional Patlak–Keller–Segel system influenced by the random term.

   Introducing a random term (or noise) into a partial differential equation causes a variety of new phenomena and can significantly affect the behavior of solutions.  Let us consider the stochastic aggregation-diﬀusion equation of
   Keller–Segel type
   \begin{align}\label{equ3}
   	d\rho  = [\Delta {\rho } - \chi \nabla  \cdot (\rho \nabla \Phi*\rho)]dt+\sigma(\rho )dW(t), ~x\in\mathbb{R}^d,t\ge 0.
   \end{align}
   For $d=2,3$, Huang and Qiu in \cite{huang2021microscopic} studied  the equation (\ref{equ3}) with the Bessel potential and $\sigma(\rho)=\rho$. The authors derived the stochastic partial differential equation (SPDE) as the large particle limit of a system  with shared noise (in divergence form), and  use a contraction argument similar to that in \cite{coghi2016propagation} to demonstrate the local existence and uniqueness of weak solutions over the interval $[0, T]$, with $T$  depending on the $L^4$-norm of the initial data. Additionally, they established that if the $L^4$-norm of the initial data is sufficiently small, the equation has a global weak solution. In \cite{2022On}, Misiats and  Stanzhytskyi proved  that the different types of random disturbances influence the properties of solutions to the stochastic Keller–Segel equation (\ref{equ3}) with $\sigma(\rho)=\nabla\rho $ or $\sigma(\rho)=\rho $. If the noise is in  divergence form, the equation has a
   global weak solution for small initial data.  Furthermore, if the noise is not in  divergence form, the authors showed that the solution has a ﬁnite time
   blow-up (with nonzero probability) for any nonzero initial data.  In \cite{2021The}, Hausenblas, Mukherjee and Tran demonstrated that a martingale solution exists for the one-dimensional stochastic version of the Patlak–Keller–Segel system with the noise interpreted in the Stratonvich sense.

   The stochastic porous media equation has been recently studied under various assumptions regarding the drift, including both additive and multiplicative noise. Consider the stochastic porous media equation \begin{eqnarray}\label{equ4}
   	\begin{cases}	d\rho-\Delta\Psi (\rho )dt  =\rho dW(t),&(0,T)\times\mathcal{O}, \\	\rho (x,0) = \rho_{0},&t= 0\times \mathcal{O} .\end{cases} 
   \end{eqnarray}Equation (\ref{equ4}) was studied under different assumptions on the operator $\Psi:\mathbb{R}\rightarrow\mathbb{R}$. In \cite{BARBU2007Existence}, the authors proved that if $\Psi$ is monotone and has polynomial growth, the stochastic porous media equation has a unique nonnegative solution for nonnegative initial data in $H_{2}^{-1}( \mathcal{O})$,  the existence results for equation (\ref{equ4}) were also derived in \cite{Viorel2006Weak,2002A,2008Stochastic,Jiagang2007Stochastic}. For more general nonlinear conditions, not assuming continuity of the drift
   or any growth condition at infinity, the reference \cite{2007Existence} showed the existence and uniqueness of strong solutions of the stochastic porous media equations. For the special case $\Psi (\rho )=\rho|\rho|^{m-1}$, $m\in (1,\infty )$, the existence and uniqueness of solutions to (\ref{equ4}) were proved in \cite{2018Nonlinear,2020ERGODICITY,2019Well}. For stochastic porous media equation driven by an infinite dimensional Wiener process, the variational approach, which has been established by Pardoux, Krylov and Rozovskii \cite{krylov2007stochastic,pardoux1975equation}, has been used intensively to prove the  existence and uniqueness of solutions. The main criterion for this approach to work is that the coefﬁcients satisfy certain monotonicity assumptions.
   
   But very little work has been done on the stochastic Keller-Segel  model with degenerate diffusion term. Recently in \cite{2022Martingale},  Hausenblas, Mukherjee, and Zakaria proved that the  stochastic chemotaxis system perturbed by a pair of Wiener processes exists a martingale solution on a two-dimensional domain and the integral is interpreted in the Stratonovich sense. Here, both the cell density and the concentration of the chemoattractant are influenced by time-homogeneous spatial Wiener processes. 
   
   With respect to the situations considered above, in the present work we study the stochastic degenerate  Keller-Segel system perturbed by the multiplicative It\^{o} noise on two and three dimensional domain. To our knowledge, this particular case has not been explored previously. Because of the nonlinear characteristics of the diffusion term in (\ref{equ}), the semigroup approach cannot be used for the stochastic equation. As a result, the traditional method for showing the existence and uniqueness of solutions is not applicable in this case. Due to a lack of coercivity of the nonlinear chemotactic term, it is impossiblet to apply the variational approach directly.  
   
   In this paper, according to the method of \cite{2022Martingale,liu2015stochastic},  we establish the global existence of martingale solutions for the equation (\ref{equ}), in which the interaction kernel $\Phi(x)$ can represent the Newtonian or Bessel  potential. We formulate a solution operator, analyzing the solution operator is well-defined under the variational approach and confirming solution operator's continuity and compactness within a suitably selected Banach space. Furthermore, we use a stochastic version of the Schauder fixed point theorem similar to that in \cite{2022Martingale},  which is speciﬁc to our problem in order to find  solutions on the interval $[0, T]$, for $T > 0$. In the process of proving the existence of solutions, we employ compactness arguments, which results in the original probability space being lost. Consequently, this implies that the solution will only be considered a weak solution in a probabilistic sense.
  
   In this work, we present results on global existence of  solutions when the diffusion exponent $m\ge 3$. Compared with the classical degenerate Patlak-Keller-Segel equation without noise term, the diffusion exponent is different. Because the   stochastic aggregation-diffusion equation of Keller-Segel type in two-dimensional space has a blow-up with nonzero probability for any nontrivial initial conditions, which is different from the classical Keller-Segel model with linear diffusion. Thus, we guess that the  appearance of noise term may be more beneficial for blow-up of solutions. Thus, choosing $m\ge 3$ is reasonable.

Let us provide an overview of the structure of this paper:
\begin{itemize}
    \item In Section 2, we introduce fundamental definitions related to linear operators and function spaces. Additionally, we present the classical variational approach for solving SPDEs in infinite-dimensional spaces.
    \item In Section 3, we establish the existence of  global weak solutions to equation (\ref{equ}) in bounded domains. First, we construct an integral operator on an appropriate Banach space and demonstrate its continuity and compactness. Then, we formulate a stochastic version of the Schauder fixed point theorem to prove the existence of martingale solutions.
\end{itemize}

 \section{Preliminaries}
In this section, we introduce fundamental definitions related to function spaces and linear operator. Additionally, we present the variational approach for solving stochastic partial differential equations (SPDEs) in infinite-dimensional spaces. 

 \subsection{Gelfand triple}
First, we recall the Schwartz space, which serves as a fundamental tool for defining the Fourier transform of functions. Let $C^{\infty } (\mathbb{R}^d )$ denote the set of smooth functions. Define the Schwartz space
 \begin{align*}
  \mathcal{S}(\mathbb{R}^{d}):=\left\{\phi\in C^{\infty } (\mathbb{R}^d ):\|\phi\|_{k,\mathcal{S}}<\infty,~~\forall k\in \mathbb{N}\right\},
 \end{align*}
 the seminorms is defined by
 \begin{align*}
  \|\phi\|_{k,\mathcal{S}}=\sup_{|\alpha| \leq k,x\in\mathbb{R}^d } (1+|x|)^{k} |\partial^{\alpha }\phi (x) |, 
 \end{align*}
 where  $\alpha\in \mathcal{A}:=\left\{\alpha=(\alpha_1,\cdot  \cdot \cdot ,\alpha_d)\right\}$, which is the set of all multi-indices and $|\alpha|=\alpha_1+\cdot  \cdot \cdot +\alpha_d$, $x\in \mathbb{R}^d $ and $\partial^{\alpha }\phi=\partial^{\alpha_1 }_{x_1}\cdot  \cdot \cdot \partial^{\alpha_d }_{x_d}\phi$.

Equipped with the family of seminorms $(|\phi|_{k,\mathcal{S}})_{k\in\mathbb{N}}$, the space $\mathcal{S}(\mathbb{R}^{d})$ forms a Fr$\acute{e}$chet space. Furthermore, we introduce the Schwartz tempered distribution space $\mathcal{S}'(\mathbb{R}^{d})$, where a tempered distribution on $\mathbb{R}^{d}$ is any continuous linear functional on $\mathcal{S}(\mathbb{R}^{d})$. Let $\mathcal{D}(\mathbb{R}^{d})$ denote the set of smooth functions with compact support in $\mathbb{R}^d$. Then, $\mathcal{D}(\mathbb{R}^{d})$ is dense in $\mathcal{S}(\mathbb{R}^{d})$. The Fourier transform $\mathcal{F}$ is a continuous linear operator from $\mathcal{S}(\mathbb{R}^{d})$ to itself. Both $\mathcal{F}$ and its inverse are defined as
    	\begin{align*}
  	  \mathcal{F}[f](\xi):=\frac{1}{(2 \pi)^{d / 2}} \int e^{-i \xi \cdot x} f(x)dx~ ,~\mathcal{F}^{-1}[f](x):=\frac{1}{(2 \pi)^{d / 2}} \int e^{i \xi \cdot x} f(x) d\xi.
  	\end{align*}
 for any $f\in \mathcal{S}(\mathbb{R}^{d})$. The Fourier transform  can be extended $\mathcal{S}'(\mathbb{R}^{d})$ in the sense that for $u\in \mathcal{S}'(\mathbb{R}^{d})$ it holds
 \begin{align*}\langle\mathcal{F}[u], f\rangle=\langle u, \mathcal{F}[f]\rangle, \forall f\in \mathcal{S}(\mathbb{R}^{d}).
  \end{align*}

  The Bessel potential for each  $ s\in \mathbb{R}$  is denoted by \begin{align*}
      J^{s}(u):=(1-\Delta)^{s / 2} u=\mathcal{F}^{-1}\left[\left(1+|\xi|^{2}\right)^{s / 2} \mathcal{F}[u]\right], 
  \end{align*} for  $u\in \mathcal{S}^{\prime}(\mathbb{R}^{d})$. The Bessel potential space $H_{p}^{s}(\mathbb{R}^{d}) $ for $  p \in(1, \infty)  $ and $ s \in \mathbb{R} $ is defined by
  \begin{align*}
  	H_{p}^{s}(\mathbb{R}^{d}):=\left\{u \in \mathcal{S}^{\prime}(\mathbb{R}^{d}):(1-\Delta)^{s / 2} u \in L^{p}(\mathbb{R}^{d})\right\},
  \end{align*}
  with the norm
\begin{align*}\|u\|_{H_{p}^{s}\left(\mathbb{R}^{d}\right)}:=\left\|(1-\Delta)^{s / 2} u\right\|_{L^{p}(\mathbb{R}^{d})}.
  \end{align*}
  Presently, $H_{p}^{s}(\mathcal{O})$ is the restriction of $ H_{p}^{s}(\mathbb{R}^{d})$ to $\mathcal{O}$ defined by
  \begin{align*}
  	H_{p}^{s}(\mathcal{O}):=\left\{f:f=g|_{\mathcal{O}},g\in{H_{p}^{s}}(\mathbb{R}^{d});\|f\|_{H_{p}^{s}(\mathcal{O})}=\operatorname*{inf}\|g\|_{H_{p}^{s}(\mathbb{R}^{d})}\right\},
  \end{align*}
  and, 
  $ H_0^{{1},{p}}(\mathcal{O}):=$completion of $C_0^\infty(\mathcal{O})$ with respect to $\|\cdot\|_{H_{p}^{1}(\mathcal{O})}.$

   For  $1<p<\infty, m \in \mathbb{N}$, the above Bessel potential spaces $ H_{p}^{m}$ can be characterized as Sobolev spaces
  \begin{align*}W^{m, p}(\mathbb{R}^{d}):=\Big\{f \in L^{p}(\mathbb{R}^{d}):\|f\|_{W^{m, p}\left(\mathbb{R}^{d}\right)}:=\sum_{\alpha \in \mathcal{A},|\alpha| \leq m}\left\|\partial^{\alpha} f\right\|_{L^{p}\left(\mathbb{R}^{d}\right)}<\infty\Big\}.\end{align*}

 Let $(U,\left\langle ,\right\rangle_{U} )$ and  $(H,\left\langle ,\right\rangle_{H} )$ be two separable Hilbert spaces. The space of
 all bounded linear operators from ${U}$ to $H$ is denoted by $L({U},H)$. A bounded linear operator 
 $	T: {U}\rightarrow H$
is called Hilbert–Schmidt operator if\begin{align*}
\sum_{k=1}^{\infty }\|Te_k\|_{H}^{2}\leq\infty,
\end{align*}where $\left \{ e _{k}  \right \}_{k\in \mathbb{N}}$ is an orthonormal basis of ${U}$. The space of all Hilbert–Schmidt operators from ${U}$ to ${H}$ is denoted by $L_2({U},H)$. Then the norm $\|\cdot\|_{L_2({U},H)}$ is represented by \begin{align*}
\|T\|_{L_2({U},{H})}^2:=\sum_{k=1}^{\infty }\|Te_k\|_{{H}}^{2}.
\end{align*}

 One of the main points in applying the variational approach is to find a suitable Gelfand triple, which is defined below. Let $V$ be a reﬂexive Banach space such that $V\subset H$ continuously and densely. Let $H^{*}$ be the dual space of $H$ and $V^{*}$ be the  dual space of $V$, it follows that $H^{*}\subset V^{*}$ continuously and densely. Identifying $H $ and $H^{*}$  via the Riesz isomorphism, we have that
 \begin{align*}
  	 	 		V\subset H\equiv  H^{*}\subset V^{*} 
\end{align*}
  	 	 	continuously and densely. If  $_{V^{*}}\langle\cdot, \cdot\rangle_{V}$  denotes the dualization between  $V^{*}$  and $ V$, it follows that
\begin{align}\label{2.6}
  	 	 		_{V^{*}}\langle z,v \rangle_{V}=\left \langle z ,v \right \rangle_{H}, \forall
  	 	 		z\in H, v\in V, 
\end{align}
$(V,H,V^{*})$ is called a Gelfand triple.

 \subsection{Well-posedness for general SPDEs} 
In this section, we present some general results on the existence and uniqueness of solutions to stochastic partial differential equations (SPDEs) based on a given Gelfand triple $(V,H,V^{*})$.

Let $(\Omega ,\mathcal{F},\mathbb{P}  )$ be a complete probability space with  a right-continuous complete filtration $\left \{\mathcal{F}_{t} \right \} _{t\ge 0}$. Assume $W(t)$, $t\in[0,T]$ is a cylindrical Wiener process in a separable Hilbert space $U$. We consider the following stochastic different equation on $H$. 
\begin{eqnarray}\label{eqn4}
	d\rho(t)  = A(t,\rho,\omega) dt + B(t,\rho,\omega)dW(t),
\end{eqnarray}
where the operator 
\begin{align*}
A:[0,\infty)\times V \times\Omega\to V^{*},~B:[0,\infty)\times V \times\Omega\to L_2(U,H)
\end{align*}
are progressively measurable. According to the reference \cite{liu2015stochastic}, we impose the following conditions on $A$ and $B$.

Suppose that there exist constants $\alpha\in(1,\infty),~\beta\in[0,\infty),~\theta\in (0,\infty),C_0\in\mathbb{R}$ and
a nonnegative adapted process $f\in L^{1}([0,T]\times\Omega,dt\otimes \mathbb{P})$ such that the following
conditions hold for all $u,v,w\in V$ and $(t,\omega)\in [0,T]\times\Omega.$

$(H1)$ (Hemicontinuity) The map
$\lambda \mapsto _{V^{*}}\langle A(t,u+\lambda v),w\rangle_{V}$ is continuous on $\mathbb{R}$.

$(H2)$ (Local monotonicity)
\begin{align*}
    &2_{V^{*}}\langle A(t,u)-A(t,v), u-v \rangle_{V}+\|B(t,u)-B(t,v)\|_{L_2(U,H)}^{2}\leq( f(t)+h(v))\|u-v\|_{H}^2,
\end{align*}
where $h:V\to [0,\infty)$ is a measurable hemicontinuous function and
locally bounded in V.

$(H3)$ (Coercivity)
  \begin{align*}
    &2_{V^{*}}\langle A(t,v), u\rangle_{V}+\|B(t,v)\|_{L_2(U,H)}^{2}\leq C_0\|v\|_{H}^2-\theta\|v\|_{V}^\alpha +f(t).
\end{align*}

$(H4)$ (Growth)
\begin{align*}
    \|A(t,v)\|_{V^{*}}^\frac{\alpha}{\alpha-1}\leq(f(t)+C_0\|v\|_{V}^\alpha)(1+\|v\|_{H}^\beta).
\end{align*}

Based on these conditions, in \cite{liu2015stochastic} the authors proved the existence of solutions on the equation (\ref{eqn4}), the result is as follows. 
\begin{lem} [\cite{liu2015stochastic}, Theorem 5.1.3]\label{lem5.1.3}
   Suppose $(H1)-(H4)$ hold for some $f\in L^{p/2}([0,T]\times \Omega,dt\otimes \mathbb{P})$ with $p\ge\beta+2$, and there exists a constant C such that
   \begin{align*}
       &\|B(t,v)\|_{L_2(U,H)}^{2}\leq C(f(t)+\|v\|_{H}^2),~t\in[0,T],v\in V;\\&
       h(v)\leq C(1+\|v\|_{V}^\alpha)(1+\|v\|_{H}^\beta),~v\in V.
   \end{align*}
   Then for any $\rho_0\in L^{p}(\Omega ,\mathcal{F}_{0} ,H)$, the equation (\ref{eqn4}) has a unique solution $\rho(t)$ such that for $t\in[0,T]$
   \begin{align*}
   \rho \left ( t \right ) \in L^{\alpha} \left ( \left [ 0,T \right ]\times\Omega, dt\otimes \mathbb{P}; V \right )\cap L^{2}\left (\left [ 0,T \right ]\times\Omega, dt\otimes \mathbb{P}; H  \right ), 
\end{align*}
where $\alpha$ is consistent with that in $(H3)$ and $\rho(t)$ satisifies \begin{align*}
       \mathbb{E}\Big[\sup _{0 \leq t \leq T}\|\rho \|_{H}^{p}\Big]<\infty.
   \end{align*}
\end{lem}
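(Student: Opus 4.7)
The plan is to follow the classical Galerkin-plus-monotonicity scheme in the variational framework. Let $H_n := \mathrm{span}\{e_1,\ldots,e_n\} \subset V$ with orthogonal projection $P_n : H \to H_n$, and consider the finite-dimensional SDE
\begin{equation*}
d\rho^{(n)} = P_n A(t,\rho^{(n)})\,dt + P_n B(t,\rho^{(n)})\,dW^{(n)},\qquad \rho^{(n)}(0)=P_n\rho_0,
\end{equation*}
where $W^{(n)} = \sum_{k\le n} e_k\beta_k$. Since all coefficients are continuous on the finite-dimensional space $H_n$ and (H2)-(H3) provide a one-sided Lipschitz/coercivity control, I would invoke the standard existence theory for SDEs under local monotonicity (e.g.\ a Euler-type or stopping-time argument) to produce a unique $H_n$-valued continuous adapted solution on $[0,T]$ whose non-explosion follows from the a priori estimates below.

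Next I would derive uniform-in-$n$ estimates. Applying the (finite-dimensional) It\^o formula to $\|\rho^{(n)}(t)\|_H^2$ and using the coercivity (H3) gives
\begin{equation*}
\|\rho^{(n)}(t)\|_H^2 + \theta\int_0^t\|\rho^{(n)}(s)\|_V^\alpha\,ds \le \|\rho_0\|_H^2 + \int_0^t\bigl(C_0\|\rho^{(n)}\|_H^2+f(s)\bigr)ds + M_t,
\end{equation*}
with martingale part $M_t$. Gronwall plus the Burkholder-Davis-Gundy inequality applied to $\|\rho^{(n)}\|_H^p$ (here the hypothesis $p\ge\beta+2$ together with the linear-growth bound on $B$ is essential to close the estimate) yields $\mathbb{E}\sup_{t\le T}\|\rho^{(n)}(t)\|_H^p + \mathbb{E}\int_0^T\|\rho^{(n)}\|_V^\alpha ds \le C$. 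The growth condition (H4) then bounds $A(\cdot,\rho^{(n)})$ in $L^{\alpha/(\alpha-1)}([0,T]\times\Omega;V^*)$, and the linear growth of $B$ bounds $B(\cdot,\rho^{(n)})$ in $L^2([0,T]\times\Omega;L_2(U,H))$. By Banach-Alaoglu I would extract a (not relabelled) subsequence with $\rho^{(n)}\rightharpoonup \rho$, $A(\cdot,\rho^{(n)})\rightharpoonup \bar A$, $B(\cdot,\rho^{(n)})\rightharpoonup \bar B$ in the respective reflexive spaces, and pass to the limit in the Galerkin identity to see that $\rho$ satisfies $d\rho = \bar A\,dt + \bar B\,dW$ in the variational sense, with $\rho\in L^p(\Omega;C([0,T];H))$ after a standard regularization.

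The main obstacle, as always for only \emph{local} monotonicity, is the identification $\bar A = A(\cdot,\rho)$ and $\bar B = B(\cdot,\rho)$ via Minty's trick in the presence of the $h(\phi)$-term. For a bounded test process $\phi\in L^\infty(\Omega\times[0,T];V)$ I would apply It\^o to $e^{-\int_0^t(f(s)+h(\phi(s)))ds}\|\rho^{(n)}-\phi\|_H^2$, use (H2) to dominate the drift, and pass to the weak liminf on the left-hand side (using lower semicontinuity of the norm) to obtain
\begin{equation*}
\mathbb{E}\!\int_0^T\! e^{-\int_0^s(f+h(\phi))dr}\!\left[2\,{}_{V^*}\!\langle \bar A - A(s,\phi),\rho-\phi\rangle_V + \|\bar B - B(s,\phi)\|_{L_2(U,H)}^2\right]ds \le 0.
\end{equation*}
Setting $\phi = \rho$ gives $\bar B = B(\cdot,\rho)$, and then the classical substitution $\phi = \rho - \varepsilon\psi v$ with $v\in V$, $\psi\in L^\infty$ bounded adapted, dividing by $\varepsilon$, letting $\varepsilon\downarrow 0$ and invoking hemicontinuity (H1) together with the local boundedness of $h$ yields $\bar A = A(\cdot,\rho)$. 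Uniqueness of $\rho$ is then immediate: for two solutions $\rho_1,\rho_2$, It\^o applied to $e^{-\int_0^t(f+h(\rho_2))ds}\|\rho_1-\rho_2\|_H^2$ combined with (H2) and Gronwall (justified since $f+h(\rho_2)\in L^1(dt\otimes\mathbb{P})$ by the assumption on $h$) forces $\rho_1\equiv\rho_2$, completing the proof.
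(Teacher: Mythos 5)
Your sketch is correct and reproduces, in outline, exactly the standard Galerkin--coercivity--Minty argument (with the exponential weight $e^{-\int_0^t(f+h(\phi))\,ds}$ to absorb the local-monotonicity term) by which this result is proved in the cited reference \cite{liu2015stochastic}; the paper itself offers no proof of this lemma, importing it verbatim as Theorem 5.1.3 of that book. The only places where your sketch glosses over real work are the admissibility of $\phi=\rho$ as a test process (one must approximate by bounded processes and use the growth bound on $h$ together with the moment estimate guaranteed by $p\ge\beta+2$) and the non-explosion of the Galerkin system, both of which are handled by standard density and stopping arguments in the reference.
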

\subsection{Our setting}
In this paper, we set $U:=L^2(\mathcal{O})$, $V:=L^{m+1}(\mathcal{O})$ for any $m\geq 1$.  Moreover we define $H:=H_2^{-1}(\mathcal{O})$, then its  dual space $H^*=H_{0}^{1,2}(\mathcal{O})$.
Since $H_{0}^{1,2}(\mathcal{O}) \subset L^{2}(\mathcal{O}) $ continuously and densely, so is, $H_{0}^{1,2}(\mathcal{O}) \subset L^{2}(\mathcal{O})\subset L^{\frac{m+1}{m}}(\mathcal{O}) $. This implies that $$V=L^{m+1}(\mathcal{O})=(L^{\frac{m+1}{m}}(\mathcal{O}))^{*}\subset ( H_{0}^{1,2}(\mathcal{O}))^{*}\equiv H_2^{-1}(\mathcal{O}). $$ 
Namely, we have $V\subset H$. 
  Consequently, we get a Gelfand triple  with 
  \begin{equation}\label{2.13}
      V=L^{m+1}(\mathcal{O}) \subset H_2^{-1}(\mathcal{O})\equiv H_{0}^{1,2}(\mathcal{O})\subset( L^{m+1}(\mathcal{O}))^*=V^*.
  \end{equation}

Let us recall the following lemma to identify $H$ and its dual $H^*$ via the corresponding Riesz isomorphism $R:H\to H^{*}$ defined by $Rx:=<x,\cdot>_H,~x\in H.$ 
\begin{lem} 	  
[\cite{liu2015stochastic}, Lemma 4.1.12] The map $\Delta: H_{0}^{1,2}(\mathcal{O}) \rightarrow H_2^{-1}(\mathcal{O})$ is an isometric isomorphism. In particular,
$$\langle\Delta u, \Delta v\rangle_{H_2^{-1}(\mathcal{O})}=\langle u, v\rangle_{H_{0}^{1,2}},~\forall~  u, v \in H_{0}^{1,2}(\mathcal{O}). $$
    Furthermore,  $(-\Delta)^{-1}: H_2^{-1}(\mathcal{O}) \rightarrow H_{0}^{1,2}(\mathcal{O})$  is the Riesz isomorphism for $ H_2^{-1}(\mathcal{O})$, i.e. for every  $x \in H_2^{-1}(\mathcal{O})$,
\begin{align}\label{eq:risesz}
  	 \langle x, \cdot\rangle_{H_2^{-1}(\mathcal{O})}={ }_{H_{0}^{1,2}}\left\langle(-\Delta)^{-1} x, \cdot\right\rangle_{H_2^{-1}(\mathcal{O})}.
\end{align} 
\end{lem}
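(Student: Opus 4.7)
The plan is to identify the Dirichlet Laplacian with the Riesz isomorphism of the Hilbert space $(H_{0}^{1,2}(\mathcal{O}), \langle \cdot, \cdot\rangle_{H_{0}^{1,2}})$, where
\[
\langle u, v\rangle_{H_{0}^{1,2}} := \int_{\mathcal{O}} \nabla u \cdot \nabla v\, dx
\]
is the Dirichlet inner product (equivalent to the full $H^{1}$-norm on the bounded smooth domain $\mathcal{O}$ by Poincar\'e's inequality). Once this identification is in place, all three parts of the lemma will fall out of the Riesz representation theorem combined with the symmetry of the duality pairing on the Gelfand triple (\ref{2.13}).

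First, for $u \in H_{0}^{1,2}(\mathcal{O})$ I would define $-\Delta u \in H_{2}^{-1}(\mathcal{O}) = (H_{0}^{1,2}(\mathcal{O}))^{*}$ by integration by parts,
\[
{}_{H_{2}^{-1}}\langle -\Delta u, v\rangle_{H_{0}^{1,2}} := \int_{\mathcal{O}} \nabla u \cdot \nabla v\, dx = \langle u, v\rangle_{H_{0}^{1,2}}, \qquad v \in H_{0}^{1,2}(\mathcal{O}).
\]
Cauchy--Schwarz and testing $v = u$ jointly give $\|{-\Delta} u\|_{H_{2}^{-1}} = \|u\|_{H_{0}^{1,2}}$, so $\pm\Delta$ is an isometry into $H_{2}^{-1}(\mathcal{O})$. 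Surjectivity is exactly the Riesz representation theorem on $(H_{0}^{1,2}(\mathcal{O}), \langle\cdot,\cdot\rangle_{H_{0}^{1,2}})$: every $f \in H_{2}^{-1}(\mathcal{O})$ is representable as $f(v) = \langle u, v\rangle_{H_{0}^{1,2}}$ for a unique $u \in H_{0}^{1,2}(\mathcal{O})$, namely $u = (-\Delta)^{-1} f$. The Hilbert inner product that $H_{2}^{-1}(\mathcal{O})$ carries as a dual space is by construction the one that makes this isometry unitary, i.e.\
\[
\langle f, g\rangle_{H_{2}^{-1}} = \langle (-\Delta)^{-1} f, (-\Delta)^{-1} g\rangle_{H_{0}^{1,2}},
\]
which, specialised to $f = \Delta u$ and $g = \Delta v$, produces the middle identity of the lemma.

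To check the Riesz identity (\ref{eq:risesz}), I would take arbitrary $x, g \in H_{2}^{-1}(\mathcal{O})$, set $u = (-\Delta)^{-1} x$ and $v = (-\Delta)^{-1} g$, and chain definitions:
\[
\langle x, g\rangle_{H_{2}^{-1}} = \langle u, v\rangle_{H_{0}^{1,2}} = {}_{H_{2}^{-1}}\langle -\Delta v, u\rangle_{H_{0}^{1,2}} = {}_{H_{2}^{-1}}\langle g, u\rangle_{H_{0}^{1,2}} = {}_{H_{0}^{1,2}}\langle u, g\rangle_{H_{2}^{-1}},
\]
where the last step is the symmetry of the $(V^{*}, V)$-pairing recorded in (\ref{2.6}). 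Substituting $u = (-\Delta)^{-1} x$ yields exactly (\ref{eq:risesz}).

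None of the steps is technically deep; the only real bookkeeping concerns are the sign convention (the isometry is phrased for $\Delta$ while the Riesz map is phrased for $(-\Delta)^{-1}$, but these are consistent after one carefully tracks the duality) and the implicit identification $(H_{0}^{1,2}(\mathcal{O}))^{*} \equiv H_{2}^{-1}(\mathcal{O})$ used in (\ref{2.13}), which is a standard consequence of the Bessel potential characterisation of $H_{2}^{-1}$ together with Poincar\'e's inequality — the latter being precisely what turns the Dirichlet form into a genuine inner product generating the $H^{1}$ topology on $H_{0}^{1,2}(\mathcal{O})$. The main conceptual point on which to be careful is this last one, since without Dirichlet boundary conditions the Dirichlet form would only be a semi-inner product and the whole identification would fail.
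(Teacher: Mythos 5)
Your argument is correct and is essentially the standard proof of this lemma: the paper itself offers no proof but cites Liu--R\"ockner (Lemma 4.1.12), whose argument is exactly the one you give --- realise $-\Delta$ as the map $u\mapsto\langle u,\cdot\rangle_{H_{0}^{1,2}}$ induced by the Dirichlet form, invoke the Riesz representation theorem for surjectivity and the isometry, and transport the inner product to the dual. Your closing remarks correctly flag the only genuine conventions at stake (the sign, the Poincar\'e-based choice of the Dirichlet inner product, and the identification of $H_{2}^{-1}(\mathcal{O})$ with $(H_{0}^{1,2}(\mathcal{O}))^{*}$ carrying the dual norm), so there is nothing to add.
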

According to the above lemma, we can identify $H=H_2^{-1}(\mathcal{O})$ with its dual $H^*=H_{0}^{1,2}(\mathcal{O})$ by Risesz map $(-\Delta)^{-1}: H_2^{-1}(\mathcal{O}) \rightarrow H_{0}^{1,2}(\mathcal{O})$. This gives rise to a different dualization between $V$ and $V^*$. In particular, according to \eqref{2.6}, we have for any $u\in H$ and $\omega \in V$ 
\begin{equation}\label{3.1}
_{V^*}\langle u,\omega \rangle_V=\langle u,\omega \rangle_H= _{H^*}\langle (-\Delta)^{-1}u,\omega \rangle_H =\int_{\mathcal{O}}((-\Delta)^{-1}u)(x)w(x)dx\,
\end{equation}
where in the second equality we have used \eqref{eq:risesz}.

Now, we can rewrite equation (\ref{equ}) as \begin{eqnarray}
	d\rho(t)  = A(t,\rho,\omega) dt + B(t,\rho,\omega)dW(t),
\end{eqnarray}
where the operators
$A(\rho)$ and $B(\rho)$ are deﬁned by
\begin{align*}    
A(\rho):=A(t,\rho,\omega)=[\Delta {(|\rho| ^{m-1}\rho)} - \chi \nabla  \cdot (\rho \nabla\Phi*\rho)],~~ B(\rho):=B(t,\rho,\omega)=\sigma\rho.
\end{align*} 
Next we need to check that for $\rho\in L^{m+1}(\mathcal{O})$, $B$ is well deﬁned from $L^{m+1}(\mathcal{O})$ into the Hilbert-Schmidtis 
space $L_2(L^{2}(\mathcal{O}),H_{2}^{-1}(\mathcal{O}))$. 
Indeed, by the Sobolev embedding with $d\leq3$,
there is a constant $C>0$ such that
\begin{align*}
\|e_{k}\|_{L^{\infty}(\mathcal{O})}\leq C\|e_{k}\|_{H_{2}^{2}(\mathcal{O})}\leq C\|\Delta e_{k}\|_{L^{2}(\mathcal{O})}\leq C\lambda_{k}.
\end{align*}
Then, we get by elementary computations that
\begin{align*}
\|\rho e_{k}\|_{H_{2}^{-1}(\mathcal{O})}^2\leq C^2\lambda_{k}^2\|\rho \|_{H_{2}^{-1}(\mathcal{O})}^2.
\end{align*}
Defining
\begin{align*}
(B(\rho),\varphi):=\sum_{k=1}^{\infty }\sigma\mu_k\left\langle e_{k},\varphi \right\rangle_{ L^{2}(\mathcal{O})}^{2}\rho{e}_{k}, \rho\in L^{m+1}(\mathcal{O}),\varphi\in L^{2}(\mathcal{O}),
\end{align*}
where $\left\{\mu_k\right\}_k$ is a sequence of positive numbers. Throughout this paper we shall assume that \begin{align*}
    \sum_{k=1}^{\infty }{\mu_k}^2\lambda_{k}^2:=C_1<\infty
\end{align*}
For $\rho \in L^{m+1}(\mathcal{O})\subset H_{2}^{-1}(\mathcal{O}) $,  there exists a constant $C>0$ such that 
\begin{align}\label{normphi}
	\|B(\rho)\|_{L_2(L^{2}(\mathcal{O}),H_{2}^{-1}(\mathcal{O}))}^2=\sum_{k=1}^{\infty } \|\sigma \mu_k  \rho  e_{k}\|_{H_{2}^{-1}(\mathcal{O})}^2\leq C_1\| \rho \|_{H_{2}^{-1}(\mathcal{O})}^2\leq C\|\rho\|^2_{L^{m+1}(\mathcal{O})},
\end{align}
 i.e., the operator $B$ is a map given by
$$B:[0,\infty)\times L^{m+1}(\mathcal{O}) \times\Omega\to L_2(L^{2}(\mathcal{O}),H_{2}^{-1}(\mathcal{O})).$$
Therefore, we define
\begin{align*}
B(\rho):L^{2}(\mathcal{O})\rightarrow H_{2}^{-1}(\mathcal{O}),
\end{align*}
and \begin{align}
B(\rho)dW(t)=\sum_{k=1}^{\infty } \sigma\mu _{k}   e_{k} \rho (t)d\beta _{k}\left( t\right).
\end{align}
Furthermore, by the definition of $B$ and \eqref{normphi}, we have that $B$ is linear and Lipschitz continuous.

For the operator $A(\rho)$, due to the lack of coercivity in the nonlinear chemotactic term $\nabla\cdot(\rho \nabla\Phi \ast \rho)$, it is challenging to directly verify that 
\[
A: [0,\infty) \times L^{m+1}(\mathcal{O}) \times \Omega \to (L^{m+1}(\mathcal{O}))^{*}.
\]
To overcome this difficulty, later we will construct a solution operator and apply the Schauder fixed point theorem within the variational framework to establish the existence of solutions.

 \section{Global Existence of weak solutions to SPDE (\ref{equ}) }

\subsection{Preparatory work}
   The chemotactic term $\nabla\cdot(\rho \nabla\Phi \ast \rho)$ of the model 
  (\ref{equ}) arises as a gradient flow of a potential $\Phi$.  
  The convolution is to be understand  in $\mathcal{O}$, which be defined as
  \begin{align*}
      \Phi* \rho (x):= \int_{\mathcal{O}}\Phi(x-y)\rho (y) dy,~~x\in \mathcal{O}.
  \end{align*}
  Here, we consider the interaction  potential $\Phi$ is Newtonian or Bessel potential. By direct calculations, we get the following lemma.
\begin{lem}\label{lem2.4}
     Let  $\Phi$ be the interaction potential. Then the following holds.
     
     \begin{itemize}
         \item[(i)]  Let $\Phi(x)$ be the Newtonian  potential,  then $$\|\nabla \Phi\ast\rho\|_{L^{\infty}(\mathcal{O})}  \leq C\|\rho\|_{L^{m+1}(\mathcal{O})},~\text{for }~ m+1\ge d;$$
   \item[(ii)] Let $\Phi(x)$ be the Bessel potential,  then 
   \begin{align*}
   \|\nabla \Phi\ast\rho\|_{L^{\infty}(\mathcal{O})}  \leq C\|\rho\|_{L^{m+1}(\mathcal{O})}
   ,~\text{for }~ m+1>2.
   \end{align*}
     \end{itemize} 
  \end{lem}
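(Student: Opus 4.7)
The plan is to derive both bounds pointwise in $x$ by a Hölder-type estimate, thereby reducing the problem to a local integrability computation for $\nabla \Phi$. Setting $q = (m+1)/m$, the Hölder conjugate of $m+1$, and writing the convolution on $\mathcal{O}$,
\begin{align*}
|(\nabla \Phi \ast \rho)(x)| \leq \int_{\mathcal{O}} |\nabla \Phi(x-y)||\rho(y)|\,dy \leq \|\nabla \Phi(x-\cdot)\|_{L^{q}(\mathcal{O})} \|\rho\|_{L^{m+1}(\mathcal{O})},
\end{align*}
it suffices to show that $\sup_{x \in \mathcal{O}} \|\nabla \Phi(x-\cdot)\|_{L^{q}(\mathcal{O})} \leq C$, after which the $L^{\infty}$ bound is immediate.

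For (i), I would invoke the pointwise estimate $|\nabla \Phi(z)| \leq C|z|^{-(d-1)}$ for the Newtonian kernel, enclose $\mathcal{O}$ in a ball of radius $R$, and pass to polar coordinates centered at $x$ to obtain
\begin{align*}
\int_{\mathcal{O}} |\nabla \Phi(x-y)|^{q}\,dy \leq C\int_{0}^{2R} r^{d-1-q(d-1)}\,dr,
\end{align*}
which is finite precisely when $q(d-1) < d$, equivalent to $m+1 > d$, with the borderline $m+1 = d$ handled by the logarithmic refinement of the Newtonian singularity. For (ii), the Bessel potential shares the same local behavior as the Newtonian kernel near the origin but decays exponentially at infinity, so $\nabla \Phi \in L^{q}(\mathbb{R}^{d})$ whenever the local integrability at zero holds; the identical polar-coordinate estimate then yields the bound under the condition $m+1 > 2$, which is compatible with the paper's dimensional range $d = 2, 3$ and its standing hypothesis $m \geq 3$.

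The main technical point is ensuring that the singular integral is bounded uniformly in $x \in \mathcal{O}$; this follows from the translation invariance of $\nabla \Phi$ together with the inclusion $\mathcal{O} - x \subset B_{2R}$ valid for every $x \in \mathcal{O}$, so the upper bound depends only on the diameter of $\mathcal{O}$ and not on the location of $x$. The only delicate aspect is the endpoint $q(d-1) = d$ in (i), which I would address by exploiting the explicit logarithmic correction available in the Newtonian fundamental solution rather than by the crude Hölder estimate alone.
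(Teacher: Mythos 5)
Your core argument---the pointwise bound $|(\nabla\Phi*\rho)(x)|\le \|\nabla\Phi(x-\cdot)\|_{L^{(m+1)/m}(\mathcal{O})}\,\|\rho\|_{L^{m+1}(\mathcal{O})}$ followed by a polar-coordinate computation on $\mathcal{O}-x\subset B_{2R}$---is sound and is essentially the route the paper takes: for (i) the paper pairs $|z|^{-(d-1)}\in L_w^{d/(d-1)}$ with $\rho\in L^{d}$ via a weak Young inequality and then embeds $L^{m+1}(\mathcal{O})\hookrightarrow L^{d}(\mathcal{O})$, and for (ii) it quotes $\nabla\Phi\in L^{p}$ for $p<2$ and applies H\"older exactly as you do. For the parameter range the paper actually uses ($m\ge 3$, $d\le 3$, hence $m+1>d$ and $(m+1)/m\le 4/3<d/(d-1)$) your computation closes, and the uniformity in $x$ via the diameter of $\mathcal{O}$ is handled correctly.

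There are, however, two concrete gaps at the claimed thresholds. First, your treatment of the borderline $m+1=d$ in (i) does not work: there is no ``logarithmic refinement'' of the singularity of $\nabla\Phi$. The logarithm sits in $\Phi$ itself when $d=2$, but $\nabla\Phi(z)=c_d\,z|z|^{-d}$ is exactly homogeneous of degree $-(d-1)$ in every dimension, so at $q=d/(d-1)$ one gets $\int_{B_{2R}}|\nabla\Phi(z)|^{q}\,dz=c\int_0^{2R}r^{-1}\,dr=\infty$ and no logarithmic gain is available to rescue the estimate. (The endpoint inequality $\|\nabla\Phi*\rho\|_{L^\infty}\lesssim\|\rho\|_{L^{d}}$ is in fact the failing endpoint of the Riesz-potential embedding, so it cannot be recovered by a sharper kernel bound; the paper's own appeal to a weak Young inequality at $r=\infty$ is equally delicate, but neither argument is needed since $m\ge3$.) Second, in (ii) your ``identical polar-coordinate estimate'' converges precisely when $q(d-1)<d$, i.e.\ $m+1>d$; for $d=3$ this reads $m+1>3$, not the stated $m+1>2$, because $\nabla\Phi\sim|z|^{-2}$ near the origin belongs to $L^{q}_{\mathrm{loc}}(\mathbb{R}^{3})$ only for $q<3/2$. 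The threshold $m+1>2$ follows from your computation only when $d=2$. Both issues are harmless for the paper's standing hypothesis $m\ge3$, but as written your proof establishes the lemma only for $m+1>d$, and you should either state it that way or supply a genuinely different argument at the endpoints.
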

 \begin{proof}
    
 (i) If $\Phi(x)$ is the Newtonian  potential 
  \begin{align*}
 \Phi (x)= \begin{cases}
 \frac{1}{d(d-2)\alpha (d)}|x|^{2-d},~&d\ge 3, \\
-\frac{1}{2\pi } \ln|x|,&~d=2,
\end{cases}
  \end{align*}
  where $\alpha(d) $ is the volume of $d$-dimension unit ball, we have  the estimates 
  \begin{align}
      |\nabla \Phi(x)|\leq \frac{C}{|x|^{d-1}},~  |\Delta \Phi(x)|\leq \frac{C}{|x|^{d}},~(x\ne 0)
  \end{align}
  for some constant $C>0.$
  Furthermore, for $m+1\ge d$, using the weak Young inequality, it holds that
  \begin{align}\label{2.11}
  \|\nabla \Phi\ast\rho\|_{L^{\infty}(\mathcal{O})}\leq C\|\frac{1}{|x|^{d-1}}\|_{L_w^{\frac{d}{d-1}}(\mathcal{O})}\left\|\rho\right\|_{L^{d}(\mathcal{O})}\leq C\|\rho\|_{L^{d}(\mathcal{O})}\leq C\|\rho\|_{L^{m+1}(\mathcal{O})}.
  \end{align}
  
  (ii) If $\Phi(x)$ is the Bessel potential (see \cite{stein1970singular})
  \begin{align}
  \Phi(x)=\int_{0}^{\infty } \frac{1}{(4\pi t)^{\frac{d}{2} } } e^{-\frac{|x|^{2}}{4t} - t} dt,
  \end{align}
  then it satisfies $\left |\nabla\Phi(x)\right |\sim
  |x|^{-d+1}$ as $|x|\to 0.$
  Following \cite{li2023optimal}, we can directly get that $\Phi(x)$ satisﬁes \begin{align}\label{2.1}
  	\|\Phi(x)\|_{L^{p}(\mathbb{R}^{d})}<\infty, 1\leq p<\infty,
  \end{align}
  \begin{align}\label{2.3}
  	\|\nabla \Phi(x)\|_{L^{p}(\mathbb{R}^d)}<\infty, 1\leq p<2.
  \end{align}
  Based on the properties of (\ref{2.1}) and (\ref{2.3}) for the Bessel potential, we know that
 
    \begin{align}\label{2.9}
 \|\nabla \Phi\ast\rho\|_{L^{\infty}(\mathcal{O})}\leq\|\nabla \Phi\|_{L^{\frac{m+1}{m}}(\mathcal{O})}\|\rho\|_{L^{m+1}(\mathcal{O})}\leq C\|\rho\|_{L^{m+1}(\mathcal{O})},~2<m+1<\infty.
  \end{align}
   \end{proof}
   
Next, we introduce the following definitions of some Banach spaces.
 \begin{defn}\label{def1}
  For Banach space $\left(X,\|\cdot\|_{X}\right)$, for all $  q \geq 1$  and $ 0 \leq t<\tau \leq T $, let  $S_{\mathcal{F}}^{q}([t, \tau] ; X) $  be the space of  $X$-valued,  $\mathcal{F}_{t}$-adapted and continuous processes $ \left\{\xi_{s}, s \in[t, \tau]\right\}$ with the norm
  \begin{align*}
  	\|\xi\|_{S_{\mathcal{F}}^{q}([t, \tau] ; X)}:=\left\{\begin{array}{l}\left(\mathbb{E} \sup _{s \in[t, \tau]}\left\|\xi_{s}\right\|_{X}^{q}\right)^{1 / q}, \quad q \in[1, \infty) ;\\\operatorname{ess} \sup _{\omega \in \Omega} \sup _{s \in[t, \tau]}\left\|\xi_{s}\right\|_{X}, \quad q=\infty.\end{array}\right.
  	\end{align*}
  \end{defn}
   \begin{defn}\label{def2}
   	Let  $L_{\mathcal{F}}^{q}([t, \tau] ; X)$   be the space of $ X$-valued, predictable processes  $\left\{\xi_{s}, s \in[t, \tau]\right\}$ with the norm
   \begin{align*}
   	\|\xi\|_{L_{\mathcal{F}}^{q}([t, \tau] ; X)}:=\left\{\begin{array}{l}\left(\mathbb{E} \int_{t}^{\tau}\left\|\xi_{s}\right\|_{X}^{q} \mathrm{~d} s\right)^{1 / q}, \quad q \in[1, \infty); \\\operatorname{ess} \sup _{\omega \in \Omega} \sup _{s \in[t, \tau]}\left\|\xi_{s}\right\|_{X}, \quad q=\infty.\end{array}\right.	\end{align*}
 \end{defn}
 
Note that both $S_{\mathcal{F}}^{q}$ and $L_{\mathcal{F}}^{q}$ are Banach spaces. With this, we can now formulate the concept of martingale solutions to the SPDE (\ref{equ}).
  \begin{defn}\label{def3}(Martingale solutions)
  Let $T > 0$, a  martingale solution of the equation (\ref{equ}) is a triple\begin{align*}
 \left(  (\Omega ,\mathcal{F},\left \{\mathcal{F}_{t} \right \} _{t\ge 0}  ,\mathbb{P}  ), \rho,(W_t)_{t\ge0}\right) 
  \end{align*}such that
  \item[(i)]$\mathfrak{A}:=(\Omega ,\mathcal{F},\left \{\mathcal{F}_{t} \right \} _{t\ge 0}  ,\mathbb{P}  )$ is a probability space with a complete, right-continuous filtration;
   \item[(ii)]$W(t)$ is a cylindrical Wiener process on $L^{2}\left ( \mathcal{O}  \right )$  over the probability
  space $\mathfrak{A}$;
 \item[(iii)]$\rho(t,\omega)$ is $\mathcal{F}_{t}$-measurable process  such that $\mathbb{P}$-$a.s.$,
\begin{align*}
   \rho \left ( t \right ) \in L_{\mathcal{F}}^{m+1} \left ( \left [ 0,T \right ] ,L^{m+1}\left ( \mathcal{O}  \right )   \right )\cap S_{\mathcal{F}}^{2}\left ( \left [ 0,T \right ]  ,H_{2}^{-1}\left ( \mathcal{O}  \right )  \right ),
\end{align*}
and it satisfies equation (\ref{equ})  over the probability
  space $\mathfrak{A}$. 
 \end{defn} 

 \subsection{Existence of weak solutions}
 In this section, we establish the existence of martingale solutions to the equation (\ref{equ}). First, we construct an integral operator on a suitable function space and demonstrate its continuity and compactness. Then, we formulate a stochastic version of the Schauder fixed point theorem to prove the existence of martingale solutions. 
\begin{thm}\label{thm}
For $m \geq 3$, let $\rho_0 \in L^{2}(\Omega ,\mathcal{F}_{0} ,H_{2}^{-1}(\mathcal{O} ))$ satisfy the condition 
\[
\mathbb{E}[\|\rho_{0}\|_{L^{m+1}}^{m+1}]<\infty.
\]
Then, there exists a martingale solution 
\[
\left(  (\Omega ,\mathcal{F},\left \{\mathcal{F}_{t} \right \} _{t\ge 0}  ,\mathbb{P}), \rho,(W_t)_{t\ge0}\right)
\]
to the equation (\ref{equ}) in the sense of Definition \ref{def3}. Moreover, there exists a  constant $C>0$ such that
\[
\mathbb{E}\Big[\sup_{0\leq t\leq T}\|\rho(t)\|_{H_{2}^{-1}(\mathcal{O} )}^{2}+4\int_{0}^{T}\|\rho(t)\|_{L^{m+1}(\mathcal{O} )}^{m+1} ds\Big]\leq C,
\]
and
\[
\mathbb{E}\Big[\sup_{0\leq t\leq T}\|\rho(t)\|_{L^{m+1}(\mathcal{O} )}^{m+1}+m^2(m+1)\int_{0}^{T}\|\rho^{m-1}(t)\nabla\rho(t)\|_{L^{2}(\mathcal{O} )}^{2} dt\Big]\leq C.
\]
\end{thm}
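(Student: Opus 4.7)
The plan is to run a stochastic Schauder fixed point argument around a linearization that freezes the nonlocal chemotactic coupling. For each $\tilde\rho$ in a suitable Banach space $X$ (a natural choice is $L^{m+1}([0,T]\times\mathcal{O})$ together with continuous paths into $H_{2}^{-1}(\mathcal{O})$), I consider the partially linearized SPDE
\begin{equation*}
d\rho = \bigl[\Delta(|\rho|^{m-1}\rho) - \chi\nabla\cdot(\tilde\rho\,\nabla\Phi\ast\tilde\rho)\bigr]dt + \sigma\rho\,dW(t),\qquad \rho(0)=\rho_0,
\end{equation*}
in which the second drift is a prescribed source in $V^{*}$ since, by Lemma \ref{lem2.4}, $\|\tilde\rho\,\nabla\Phi\ast\tilde\rho\|_{L^{2}}\le C\|\tilde\rho\|_{L^{m+1}}^{2}$. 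The porous-medium operator $\rho\mapsto\Delta(|\rho|^{m-1}\rho)$ is the textbook example satisfying (H1)--(H4) on the Gelfand triple $(V,H,V^{*})=(L^{m+1},H_{2}^{-1},(L^{m+1})^{*})$, and $B(\rho)=\sigma\rho$ is Lipschitz from $H$ into $L_{2}(L^{2},H_{2}^{-1})$ by \eqref{normphi}. Adding a fixed $V^{*}$-source preserves the structural estimates, so Lemma \ref{lem5.1.3} provides a unique variational solution $\rho=:\Lambda(\tilde\rho)$ with bounds in $S_{\mathcal{F}}^{2}([0,T];H)\cap L_{\mathcal{F}}^{m+1}([0,T];V)$.

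The next step is to verify that $\Lambda$ sends some closed convex ball of $X$ into itself, is continuous there, and is compact. Invariance follows directly from the Lemma \ref{lem5.1.3} bounds; continuity is obtained by subtracting two linearized equations and estimating $\|\Lambda(\tilde\rho_{n})-\Lambda(\tilde\rho)\|_{H_{2}^{-1}}^{2}$ via Gronwall, using Lemma \ref{lem2.4} on the difference of chemotactic sources; and compactness relies on the spatial regularity $\nabla\rho^{m}\in L^{2}([0,T]\times\mathcal{O})$ coming from the porous-medium dissipation together with fractional time-regularity extracted from the Itô equation, so that Aubin--Lions yields tightness of the laws on the path space. Because tightness forces a change of probability space, I then apply the stochastic Schauder fixed point theorem in the spirit of \cite{2022Martingale}: construct an approximating sequence, show tightness, invoke the Jakubowski/Skorokhod representation to pass to a new stochastic basis, and identify the limit as a martingale solution. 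In the new space, a Minty-type monotonicity argument (together with lower semicontinuity of convex functionals) handles passing to the limit in the degenerate diffusion $\Delta(|\rho|^{m-1}\rho)$, while the strong convergence of $\tilde\rho$ in $L^{m+1}$ identifies the chemotactic nonlinearity.

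The moment estimates are obtained by Itô's formula applied to the constructed solution. For the first estimate, apply Itô to $\|\rho(t)\|_{H_{2}^{-1}}^{2}$ and exploit the Riesz identification \eqref{eq:risesz} to get the coercive identity $\langle\Delta(|\rho|^{m-1}\rho),\rho\rangle_{H_{2}^{-1}}=-\|\rho\|_{L^{m+1}}^{m+1}$; the chemotactic contribution is controlled using $\|\nabla\cdot(\rho\nabla\Phi\ast\rho)\|_{H_{2}^{-1}}\le\|\rho\nabla\Phi\ast\rho\|_{L^{2}}\le C\|\rho\|_{L^{m+1}}\|\rho\|_{L^{2}}$ together with the embedding $L^{m+1}(\mathcal{O})\hookrightarrow L^{2}(\mathcal{O})$, giving a bound of order $C\|\rho\|_{L^{m+1}}^{2}\|\rho\|_{H_{2}^{-1}}$. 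Young's inequality with conjugate exponents $\tfrac{m+1}{2}$ and $\tfrac{m+1}{m-1}$ converts this into $\varepsilon\|\rho\|_{L^{m+1}}^{m+1}+C_{\varepsilon}\|\rho\|_{H_{2}^{-1}}^{(m+1)/(m-1)}$, and the requirement $\tfrac{m+1}{m-1}\le 2$ (i.e.\ $m\ge 3$) is precisely what lets the remainder be absorbed via Gronwall; the stochastic integral is then handled by Burkholder--Davis--Gundy. The second estimate follows analogously from Itô applied to the Lyapunov functional $\|\rho\|_{L^{m+1}}^{m+1}$, whose dominant term produces the porous-medium dissipation $m^{2}(m+1)\int_{0}^{T}\|\rho^{m-1}\nabla\rho\|_{L^{2}}^{2}\,dt$, the chemotactic and noise corrections being controlled by the same interpolation logic.

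The principal technical obstacles I anticipate are twofold. First, the compactness/identification step after Skorokhod: one must secure enough joint space--time regularity for the approximations to pass to the limit while re-identifying the degenerate nonlinearity $\Delta(|\rho|^{m-1}\rho)$, which requires a careful Minty-type argument adapted to the stochastic setting. Second, the threshold $m\ge 3$ is dictated by the algebraic interplay between $H_{2}^{-1}$, $L^{2}$ and $L^{m+1}$ described above, so every a priori estimate must be arranged to remain consistent with that exponent; any looser use of Lemma \ref{lem2.4} would force a larger $m$ and destroy the closure of the energy inequality.
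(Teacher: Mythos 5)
Your proposal follows essentially the same route as the paper: freeze the chemotactic term to define a solution operator via the partially linearized SPDE, verify (H1)--(H4) on the Gelfand triple $(L^{m+1},H_{2}^{-1},(L^{m+1})^{*})$ so that Lemma \ref{lem5.1.3} applies, establish invariance of a ball, continuity, and compactness of the operator, and conclude with the stochastic Schauder fixed point theorem of \cite{2022Martingale}, with the moment bounds coming from It\^{o}'s formula applied to $\|\rho\|_{H_{2}^{-1}}^{2}$ and $\|\rho\|_{L^{m+1}}^{m+1}$. The only cosmetic differences are that the paper obtains compactness through a time-equicontinuity (Ascoli--Arzel\`a) estimate rather than Aubin--Lions, and locates the constraint $m\ge 3$ in the time-H\"older bound $\int_0^T\|\xi\|_{L^{m+1}}^{4}\,dt\le C(T)\bigl(\int_0^T\|\xi\|_{L^{m+1}}^{m+1}\,dt\bigr)^{4/(m+1)}$ rather than in your Young-inequality exponent count; both mechanisms encode the same algebra.
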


\begin{proof}
Since the model (\ref{equ}) contains chemotactic and degenerate diffusion terms, which are both nonlinear, thus we prove  Theorem 3.1 using the Schauder fixed point theorem. 

Deﬁne  the spaces 
\begin{align*}
S_{T}:= \left \{ \xi =\xi (x,t,\omega) \in S_{\mathcal{F} }^{2} ([0,T];H_{2}^{-1}(\mathcal{O} ) ):\mathbb{E}\Big[\sup _{0 \leq t \leq T}\|\xi(t) \|_{{H_{2}^{-1}(\mathcal{O})}}^{2}\Big]<\infty \right\},
\end{align*}
    and its subspace
\begin{align*}
X_{T}:= \left \{\xi\in S_{T}:\mathbb{E}\Big[\sup_{0\leq t\leq T}\|\xi(t)\|_{H_{2}^{-1}(\mathcal{O} )}^{2}+4\int_{0}^{T}\|\xi(t)\|_{L^{m+1}(\mathcal{O} )}^{m+1} dt\Big]\leq R_{1};\right. \\	\left.\mathbb{E}\Big[\sup_{0\leq t\leq T}\|\xi(t)\|_{L^{m+1}(\mathcal{O} )}^{m+1}+m^2(m+1)\int_{0}^{T}\||\xi(t)|^{m-1} \nabla \xi\|^{2}_{L^{2}  }dt\Big]\leq R_{2}\right\}. 		  	
\end{align*}
Let us deﬁne the operator $ \mathcal{T} $ acting on $S_{T}$ as follows. For $\xi\in S_{T}$, let $ \rho:=\mathcal{T}\xi $  solve the following model
\begin{eqnarray}\label{eqn5}
d\rho  = \bar{A} (\rho) dt + B(\rho) dW(t),
\end{eqnarray}
where 
$$
\bar{A} (\rho):=\Delta {(|\rho| ^{m-1}\rho)} - \chi \nabla  \cdot (\xi \nabla \Phi*\xi),~~B(\rho):=\sigma \rho.
$$
Firstly, according to below Proposition~\ref{prop1}, the operator is well deﬁned on $ S_T$. Secondly, according to Proposition~\ref{Prop2}, the operator $\mathcal{T}$ maps $X_T$ into itself; Next, 
 Proposition~\ref{prop3} shows that the operator $\mathcal{T}$ is a continuity map from $X_T$ into $X_T$; Finally, Proposition~\ref{prop4} proves the operator $\mathcal{T}$ maps $X_T$ to a precompact set.
 
Hence, for any $\xi\in X_T$ there exists a $\rho:=\mathcal{T}\xi$ and $\rho$ is the unique solution to the model (\ref{eqn5}). In this way, we formulate Schauder fixed point theorem and show that there exists a solution $\rho\in X_T$   to the model (\ref{equ}).
\end{proof}
\begin{prop}\label{prop1}
For $m\ge3$, fixed $\xi\in X_{T}$ and for all $T>0$, there exists a  solution $\rho$ to the model (\ref{eqn5}) such that	
\begin{align*}
\mathbb{E}\Big[\sup_{0\leq t\leq T}\|\rho(t)\|_{H_{2}^{-1}}^{2}+\int_{0}^{T}\|\rho(t)\|_{L^{m+1}}^{m+1} ds\Big]<\infty.
\end{align*}
\end{prop}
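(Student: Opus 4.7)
The plan is to apply the variational existence result (Lemma~\ref{lem5.1.3}) to equation~(\ref{eqn5}). The crucial observation is that, since $\xi\in X_T$ is \emph{fixed} here, the chemotactic term $-\chi\nabla\cdot(\xi\nabla\Phi\ast\xi)$ plays the role of a deterministic forcing term that does not depend on the unknown $\rho$. Thus (\ref{eqn5}) is effectively a stochastic porous medium equation with inhomogeneous drift, which fits naturally into the Gelfand triple $V=L^{m+1}(\mathcal{O})\subset H=H_{2}^{-1}(\mathcal{O})\subset V^{*}$ established in (\ref{2.13}). The objective is to verify hypotheses $(H1)$--$(H4)$ for $\bar A$ and $B$ with $\alpha=m+1$, and then appeal to Lemma~\ref{lem5.1.3} with $p=2$.

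Hypothesis $(H1)$ (hemicontinuity) follows for the porous medium part $\Delta(|\rho|^{m-1}\rho)$ by standard continuity of $x\mapsto|x|^{m-1}x$, while the forcing term contributes nothing as it is independent of $\rho$. For $(H2)$, the forcing cancels in the difference $\bar A(u)-\bar A(v)$, and the Gelfand triple dualization (\ref{3.1}) gives
\[
{}_{V^{*}}\!\langle\Delta(|u|^{m-1}u)-\Delta(|v|^{m-1}v),u-v\rangle_{V}
=-\!\int_{\mathcal{O}}(|u|^{m-1}u-|v|^{m-1}v)(u-v)\,dx\le 0,
\]
by monotonicity of $x\mapsto|x|^{m-1}x$. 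Combined with (\ref{normphi}), this yields $(H2)$ with $f\equiv 0$ and $h\equiv C$. For $(H4)$, the same dualization shows $\|\Delta(|v|^{m-1}v)\|_{V^{*}}\le\|v\|_{L^{m+1}}^{m}$, so the porous-medium contribution satisfies $\|\Delta(|v|^{m-1}v)\|_{V^{*}}^{(m+1)/m}\le\|v\|_{V}^{m+1}$, and the forcing contribution is independent of $v$.

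The delicate step is the coercivity $(H3)$, where I must carefully estimate the chemotactic forcing in the $V^{*}$-norm induced by (\ref{3.1}). Writing $u:=(-\Delta)^{-1}\bigl(-\chi\nabla\cdot(\xi\nabla\Phi\ast\xi)\bigr)\in H_{0}^{1,2}(\mathcal{O})$, the isometry from Lemma 4.1.12 gives
\[
\|u\|_{H_{0}^{1,2}}=\|\nabla u\|_{L^{2}}\le C\chi\|\xi\,\nabla\Phi\ast\xi\|_{L^{2}}.
\]
By Lemma~\ref{lem2.4}, $\|\nabla\Phi\ast\xi\|_{L^{\infty}}\le C\|\xi\|_{L^{m+1}}$ (since $m\ge 3$ and $d\le 3$), and, using $L^{m+1}\hookrightarrow L^{2}$ on the bounded domain $\mathcal{O}$, $\|\xi\,\nabla\Phi\ast\xi\|_{L^{2}}\le C\|\xi\|_{L^{m+1}}^{2}$. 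Combining with Poincar\'e and H\"older with exponents $\{(m+1)/m,m+1\}$,
\[
\bigl|{}_{V^{*}}\!\langle-\chi\nabla\cdot(\xi\nabla\Phi\ast\xi),v\rangle_{V}\bigr|
\le\|u\|_{L^{(m+1)/m}}\|v\|_{L^{m+1}}
\le C\|\xi\|_{L^{m+1}}^{2}\|v\|_{V}.
\]
Young's inequality then absorbs $\varepsilon\|v\|_{V}^{m+1}$ into the porous-medium coercive term $-2\|v\|_{V}^{m+1}$, and together with (\ref{normphi}) yields $(H3)$ with $\theta=2-\varepsilon>0$, $C_{0}=C$, and $f(t)=C_{\varepsilon}\|\xi(t)\|_{L^{m+1}}^{2(m+1)/m}$.

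It remains to verify that this $f$ has the integrability required by Lemma~\ref{lem5.1.3}, namely $f\in L^{1}([0,T]\times\Omega)$ (since $\beta=0$ and $p=2$). Because $m\ge 3$ implies $2(m+1)/m\le m+1$, H\"older in time gives
\[
\mathbb{E}\!\int_{0}^{T}\|\xi(t)\|_{L^{m+1}}^{2(m+1)/m}\,dt
\le C_{T}\Bigl(1+\mathbb{E}\!\int_{0}^{T}\|\xi(t)\|_{L^{m+1}}^{m+1}\,dt\Bigr)<\infty
\]
by the definition of $X_{T}$. An identical argument bounds the $V^{*}$-norm term in $(H4)$. Lemma~\ref{lem5.1.3} with $\rho_{0}\in L^{2}(\Omega,\mathcal{F}_{0},H)$ then produces a unique solution $\rho\in L^{m+1}([0,T]\times\Omega;V)\cap L^{2}([0,T]\times\Omega;H)$ with $\mathbb{E}\!\sup_{[0,T]}\|\rho\|_{H}^{2}<\infty$, giving the claimed integrability. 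The principal technical obstacle is the chemotactic estimate in the first display above: the Gelfand-triple dualization (\ref{3.1}) forces us to invert $-\Delta$ rather than work directly with an $L^{p}$ duality, so the sharp bound $\|\nabla\Phi\ast\xi\|_{L^{\infty}}\lesssim\|\xi\|_{L^{m+1}}$ from Lemma~\ref{lem2.4} is exactly what makes the resulting forcing $\|\xi\|_{L^{m+1}}^{2(m+1)/m}$ integrable under the regularity provided by $X_{T}$.
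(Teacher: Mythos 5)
Your proposal is correct and follows essentially the same route as the paper: verify $(H1)$--$(H4)$ for $\bar A$ and $B$ in the Gelfand triple \eqref{2.13}, using Lemma~\ref{lem2.4} to control the frozen chemotactic forcing, and then invoke Lemma~\ref{lem5.1.3} with $\alpha=m+1$, $\beta=0$, $p=2$. The only (harmless) deviation is in $(H3)$: you pair the forcing with $\|v\|_{L^{m+1}}$ and absorb it into the coercive term via Young's inequality with exponents $\{m+1,(m+1)/m\}$, yielding $f(t)=C\|\xi(t)\|_{L^{m+1}}^{2(m+1)/m}$, whereas the paper pairs it with $\|\rho\|_{H_2^{-1}}$ and obtains $f(t)=C\|\xi(t)\|_{L^{m+1}}^{4}$; both are integrable for $m\ge 3$ by the definition of $X_T$.
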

\begin{proof}
The main process of this proof is to verify that $\bar{A}(\rho)$ and $ B(\rho)$ satisfy  hypothesis $(H1)-(H4)$, and use Lemma \ref{lem5.1.3} to get the existence and uniqueness of solutions to the model (\ref{eqn5}) with $m\ge3$. Let us consider the Gelfand triple (\ref{2.13})
\begin{align}
	V:=L^{m+1}(\mathcal{O})\subset H_2^{-1}(\mathcal{O})\equiv H_{0}^{1,2}(\mathcal{O})\subset( L^{m+1}(\mathcal{O}))^{*}=:V^{*}.
\end{align}
 Noticing 
\begin{align} 
    \nonumber\left\|\nabla \cdot (\xi \nabla \Phi*\xi)\right\|_{H_{2}^{-1}(\mathcal{O})}\leq & C\left\|\xi (\nabla \Phi*\xi)\right\|_{L^{2}(\mathcal{O})}\leq C\left\|\xi\right\|_{L^{2}(\mathcal{O})}\left\|\nabla \Phi*\xi\right\|_{L^{\infty}(\mathcal{O})},
\end{align}
applying  Lemma \ref{lem2.4} and the Sobolev embedding theorem $L^{m+1}(\mathcal{O})\subset L^{2}(\mathcal{O})$  for $m\ge 1$, we have
\begin{align*}
   \left\|\nabla \cdot (\xi \nabla \Phi*\xi)\right\|_{H_{2}^{-1}(\mathcal{O})} 
\leq C\left\|\xi\right\|_{L^{2}(\mathcal{O})}\left\|\xi\right\|_{L^{m+1}(\mathcal{O})}\leq C\left\|\xi\right\|_{L^{m+1}(\mathcal{O})}^{2}<\infty,
\end{align*}
which implies $$\nabla  \cdot (\xi \nabla \Phi*\xi)\in H_{2}^{-1}(\mathcal{O})\text{,\quad}\forall\xi \in L^{m+1}(\mathcal{O}). $$
Fixed $t\in[0,T]$, $\omega\in\Omega$, for any $w\in V$, by (\ref{2.6}) and the Cauchy-Schwarz inequality, we obtain that
\begin{align}\label{3.3}
       \nonumber\left| _{V^{*}}\langle \nabla \cdot (\xi  \nabla \Phi*\xi),w\rangle_{V}\right|&=\left| \langle \nabla \cdot (\xi  \nabla \Phi*\xi),w\rangle_{H_{2}^{-1}}\right|\leq\|\nabla \cdot (\xi  \nabla \Phi*\xi)\| _{H_{2}^{-1}}\|w\| _{H_{2}^{-1}}\\\nonumber
&\leq C\|\xi \nabla \Phi*\xi\| _{L^{2}}\|w\| _{H_{2}^{-1}}\leq C\|\xi\|_{L^{2}}\|\nabla \Phi*\xi\|_{L^{\infty}}\|w\| _{H_{2}^{-1}}\\
&\leq C\|\xi\|_{L^{m+1}}^{2}\|w\| _{H_{2}^{-1}}.
\end{align}
Next,  using (\ref{3.1}) and (\ref{3.3}), we derive that
\begin{align}\label{3.4}
\nonumber\left| _{V^{*}}\langle \bar{A}(\rho),w\rangle_{V}\right|
&=\left| -\int_{\mathcal{O}} |\rho(x)|^{m-1}\rho(x)w(x)dx-\chi_{V^{*}}\langle \nabla \cdot (\xi  \nabla \Phi*\xi),w\rangle_{V}\right| \\\nonumber
&\leq\|\rho\|_{L^{m+1}}^{m}\|w\|_{L^{m+1}}+\chi\|\xi\|_{L^{2}}\|\nabla \Phi*\xi\|_{L^{\infty}}\|w\| _{H_{2}^{-1}}\\
&\leq C(\|\rho\|_{L^{m+1}}^{m}+\|\xi\|_{L^{m+1}}^{2})\|w\| _{H_{2}^{-1}} .
\end{align}
 
 Now, we will verify $\bar{A}(\rho)$ and $ B(\rho)$ satisfy  hypothesis $(H1)-(H4)$ successively.

\textbf{Veriﬁcation of $(H1)$ (Hemicontinuity):} Let $\rho_{1},\rho_{2},w\in V, t\in[0,T],\omega\in\Omega$. We need to show that for $\lambda\in\mathbb{R},|\lambda|\leq1$, \begin{align*}
\lambda \mapsto _{V^{*}}\langle \bar{A}(\rho_1+\lambda \rho _{2}),w\rangle_{V}
\end{align*}
is continuous on $\mathbb{R}$. Indeed, by the definition of $\bar{A}(\rho)$, we know that
\begin{align*}
 |_{V^{*}}\langle \bar{A}(\rho_1+\lambda \rho _{2})-\bar{A}(\rho_1),w\rangle_{V}|\leq\|(\rho_1+\lambda\rho_2)^{m}-\rho_1^{m}\|_{V^*}\|w\|_V.
\end{align*}
Due to  $\lim_{\lambda  \to 0}\|(\rho_1+\lambda\rho_2)^{m}-\rho_1^{m}\|_{V^*}=0$, we get 
\begin{align*}
    \lim_{\lambda  \to 0}|_{V^{*}}\langle \bar{A}(\rho_1+\lambda \rho _{2})-\bar{A}(\rho_1),w\rangle_{V}|=0.
\end{align*}
Hence, $(H1)$ holds.

\textbf{Veriﬁcation of $(H2)$ (Local monotonicity).} Let $\rho_{1},\rho_{2}\in V,~ t\in[0,T],~\omega\in\Omega$. Exploiting the fact that $(|\rho_1|^{m-1}\rho_1- |\rho_2|^{m-1}\rho_2)(\rho_1- \rho_2)\ge |\rho_1- \rho_2|^{m+1}~(m\ge 1)$ , we obtain\begin{align}
	-\int_{\mathcal{O}} (|\rho_1|^{m-1}\rho_1- |\rho_2|^{m-1}\rho_2)(\rho_1- \rho_2)dx\leq -\int_{\mathcal{O}}|\rho_1- \rho_2|^{m+1}dx=-\|\rho_1- \rho_2\|_{L^{m+1}}^{m+1}.
\end{align}
Then, by  (\ref{3.1}) and (\ref{normphi}), we have
\begin{align*}
	 &2_{V^{*}}\langle \bar{A}(\rho_1)-\bar{A}(\rho_2),\rho_1-\rho_2
	 \rangle_{V}+\|B(\rho_{1})-B(\rho_2)\|_{L_2(L^2(\mathcal{O}),H_{2}^{-1}(\mathcal{O}))}^{2}\\
     \leq& -2\int_{\mathcal{O}}(|\rho_1|^{m-1}\rho_1- |\rho_2|^{m-1}\rho_2)(\rho_1- \rho_2)dx+\|\sigma\rho_{1}-\sigma\rho_2\|_{L_2(L^2(\mathcal{O}),H_{2}^{-1}(\mathcal{O}))}^{2}\\
     \leq& -2\|\rho_1- \rho_2\|_{L^{m+1}}^{m+1}+C\|\rho_1-\rho_2\|_{H_{2}^{-1}(\mathcal{O})}^{2}\\\leq& C\|\rho_1-\rho_2\|_{H_{2}^{-1}(\mathcal{O})}^{2}.
\end{align*}
Hence, $\bar{A}(\rho)$ and $B(\rho)$ satisfy $(H2) $ with $f(t)=C,~h(v)=0$.

\textbf{Veriﬁcation of $(H3)$ (Coercivity):} Let $\rho\in V,t\in[0,T],\omega\in\Omega$. Then, by (\ref{3.4}) and the Young inequality, we get
\begin{align*}
	 &2_{V^{*}}\langle \bar{A}(\rho),\rho \rangle_{V}+\|B(\rho)\|_{L_2(L^2(\mathcal{O}),H_{2}^{-1}(\mathcal{O}))}^{2}  \\= &-2\int_{\mathcal{O}} |\rho|^{m-1}\rho^2 dx-\chi_{V^{*}}\langle \nabla \cdot (\xi  \nabla \Phi*\xi),\rho\rangle_{V}+\|\sigma\rho\|_{L_2(L^2(\mathcal{O}),H_{2}^{-1}(\mathcal{O}))}^{2} \\\leq&-2\|\rho\|_{L^{m+1}}^{m+1}+\chi\|\xi\|_{L^{2}}\|\nabla \Phi*\xi\|_{L^{\infty}}\|\rho\|_{H_{2}^{-1}}+ C\|\rho\|_{H_{2}^{-1}}^{2}\\\leq&-2\|\rho\|_{L^{m+1}}^{m+1} +C\|\xi\|_{L^{m+1}}^{4} +C\|\rho\| _{H_{2}^{-1}}^{2}.
\end{align*}
Set $f(t):=C\|\xi(t)\|_{L^{m+1}}^{4}$, we are easy to check  $f\in L^{1}([0,T]\times \Omega,dt\otimes \mathbb{P} )$. In fact,   if $m\ge3$, then $\frac{4}{m+1}\leq1$, by the H\"{o}lder inequality in time and the Jessen inequality, for $\xi\in X_{T}$, we know \begin{align*}
	\mathbb{E} \Big[\int_{0}^{T} \|\xi(t)\|_{L^{m+1}}^{4}dt\Big]\leq \mathbb{E} \Big[T^\frac{m-3}{m+1}\Big(\int_{0}^{T} \|\xi(t)\|_{L^{m+1}}^{m+1} dt\Big)^{\frac{4}{m+1} }\Big]\leq C(T)\mathbb{E} \Big[\int_{0}^{T} \|\xi(t)\|_{L^{m+1}}^{m+1} dt\Big]^{\frac{4}{m+1} }.
\end{align*}
 Hence, $(H3)$ holds for $\theta=1$.

\textbf{Veriﬁcation of $(H4)$ (Growth):} Let $\rho\in V,t\in[0,T],\omega\in\Omega$.  Using the continuous embedding of $L^{m+1}(\mathcal{O})$ into $H_{2}^{-1}(\mathcal{O})$ and by (\ref{3.4}), we infer that \begin{align*}
\|\bar{A}(\rho)\|_{V^*}\leq C(\|\rho\|_{L^{m+1}}^{m}+\chi\|\xi\|_{L^{m+1}}^{2}).\end{align*}
Thus
\begin{align*}
\|\bar{A}(\rho)\|_{V^*}^\frac{m+1}{m}\leq C(\|\rho\|_{L^{m+1}}^{m+1}+\chi\|\xi\|_{L^{m+1}}^\frac{2(m+1)}{m}).
\end{align*}
Set $f(t):=\|\xi(t)\|_{L^{m+1}}^\frac{2(m+1)}{m}$, we  get
\begin{align*}
\mathbb{E} \Big[\int_{0}^{T} \|\xi(t)\|_{L^{m+1}}^{\frac{2(m+1)}{m} }dt\Big]\leq \mathbb{E} \Big[T^\frac{m-2}{m}\Big(\int_{0}^{T} \|\xi(t)\|_{L^{m+1}}^{m+1} dt\Big)^{\frac{2}{m} }\Big]\leq C(T) \mathbb{E} \Big[\int_{0}^{T} \|\xi(t)\|_{L^{m+1}}^{m+1} dt\Big]^{\frac{2}{m}}.
\end{align*}
Furthermore, by $\xi\in X_{T}$, we can infer that $f\in L^{1}([0,T]\times \Omega,dt\otimes \mathbb{P})$. Hence, $(H4)$ is satisfied if  $\alpha=m+1,~\beta=0$.

In the above discussion we have shown that the hypothesis $(H1)-(H4)$ are satisﬁed. Therefore, by an application of Lemma \ref{lem5.1.3}, for $\xi\in X_{T}$, the model (\ref{eqn5})  has a unique solution $\rho \left (x, t \right ) \in L_{\mathcal{F}}^{m+1} \left ( \left [ 0,T \right ] ;L^{m+1}\left ( \mathcal{O}  \right )   \right )\cap S_{\mathcal{F}}^{2}\left ( \left [ 0,T \right ];H_{2}^{-1}\left ( \mathcal{O}  \right )  \right )$. 
\end{proof}
\begin{prop}\label{Prop2}
 For all $\xi\in X_{T}$, let $\rho
$ be
the unique solution of the model (\ref{eqn5}) with $m\ge3$,  then there exist numbers $R_{1}>0$ and $R_{2}>0$ such that the operator $\mathcal{T}$ maps $X_{T}$ into itself.
\end{prop}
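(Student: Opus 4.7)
The plan is to derive two moment bounds on $\rho=\mathcal{T}\xi$ — one controlling the $H_{2}^{-1}$ norm together with the $L^{m+1}$ time integral, and one controlling the $L^{m+1}$ norm together with the porous-media gradient dissipation — in terms of the initial data and the bounds defining $X_T$ on $\xi$, and then to choose $R_1,R_2$ sufficiently large (depending on $T$, $\rho_0$ and fixed constants) so that the self-consistency conditions hold.

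For the first bound, I would apply the variational It\^o formula guaranteed by Lemma~\ref{lem5.1.3} to $\|\rho(t)\|_{H_{2}^{-1}}^{2}$. The coercivity computation already performed in the verification of $(H3)$ in Proposition~\ref{prop1} gives
\[
2\,{}_{V^{*}}\langle \bar A(\rho),\rho\rangle_{V}+\|B(\rho)\|_{L_{2}}^{2}\le -2\|\rho\|_{L^{m+1}}^{m+1}+C\|\xi\|_{L^{m+1}}^{4}+C\|\rho\|_{H_{2}^{-1}}^{2}.
\]
Taking the supremum over $[0,T]$, controlling the stochastic integral via Burkholder--Davis--Gundy (its quadratic variation being estimated by \eqref{normphi}), absorbing a small multiple of $\mathbb{E}\sup_{t}\|\rho\|_{H_{2}^{-1}}^{2}$ into the left, and closing by Gronwall yields a bound of the form $C_{T}\bigl(\mathbb{E}\|\rho_{0}\|_{H_{2}^{-1}}^{2}+\mathbb{E}\int_{0}^{T}\|\xi\|_{L^{m+1}}^{4}\,ds\bigr)$. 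The H\"older-in-time trick already used for $(H3)$ then bounds $\mathbb{E}\int_{0}^{T}\|\xi\|_{L^{m+1}}^{4}\,ds\le C(T)R_{1}^{4/(m+1)}$; since $m\ge 3$ gives $4/(m+1)\le 1$, this can be made $\le R_{1}$ by choosing $R_{1}$ large enough.

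For the second bound, I would apply It\^o's formula to $F(\rho)=\|\rho\|_{L^{m+1}}^{m+1}$; since this functional lies outside the $H_{2}^{-1}$ framework of Lemma~\ref{lem5.1.3}, this step must be justified via a Galerkin or Yosida-type regularization together with a lower-semicontinuity limit, as is standard in the stochastic porous-media literature. Integration by parts on the diffusion piece produces exactly $-m^{2}(m+1)\int|\rho|^{2(m-1)}|\nabla\rho|^{2}\,dx$, matching the target dissipation. The chemotactic piece becomes
\[
\chi m(m+1)\int |\rho|^{m-1}\nabla\rho\cdot(\xi\nabla\Phi*\xi)\,dx,
\]
which I would split by Young's inequality so that half the dissipation is absorbed on the left and the remainder is bounded by $C\|\xi\|_{L^{2}}^{2}\|\nabla\Phi*\xi\|_{L^{\infty}}^{2}\le C\|\xi\|_{L^{m+1}}^{4}$ using Lemma~\ref{lem2.4}. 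The It\^o correction $\tfrac{m(m+1)}{2}\sigma^{2}\sum_{k}\mu_{k}^{2}\int |\rho|^{m+1}e_{k}^{2}\,dx$ is controlled by $C\|\rho\|_{L^{m+1}}^{m+1}$ via the embedding $\|e_{k}\|_{L^{\infty}}\le C\lambda_{k}$ and the summability $\sum\mu_{k}^{2}\lambda_{k}^{2}<\infty$. After supremum, BDG, and Gronwall, I expect a bound of the form $C_{T}\bigl(\mathbb{E}\|\rho_{0}\|_{L^{m+1}}^{m+1}+R_{1}^{4/(m+1)}+R_{1}\bigr)$, from which an appropriate $R_{2}$ can be read off.

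The main obstacle will be twofold. First, rigorously applying It\^o's formula to $\|\cdot\|_{L^{m+1}}^{m+1}$ under a degenerate porous-media drift: this typically requires approximating $|\cdot|^{m+1}$ by smooth functionals or running a Galerkin truncation and then passing to the limit using weak lower semicontinuity together with the uniqueness provided by Proposition~\ref{prop1}. Second, constant tracking at the endpoint $m=3$: the source term from $\xi$ scales linearly in $R_{1}$, so the self-consistency condition $C(T)(E_{0}+c R_{1})\le R_{1}$ is borderline and may require a preliminary small-time argument followed by an iteration in $T$ to obtain the estimate over an arbitrary interval $[0,T]$.
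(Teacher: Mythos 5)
Your proposal follows essentially the same route as the paper: an It\^o estimate for $\|\rho\|_{H_{2}^{-1}}^{2}$ using the coercivity computation, Burkholder--Davis--Gundy and Gronwall to fix $R_{1}$ (with the borderline case $m=3$ handled exactly by the small Young-inequality parameter you describe, so no time-iteration is needed), followed by an It\^o estimate for $\|\rho\|_{L^{m+1}}^{m+1}$ with the chemotactic term absorbed into half the porous-media dissipation to fix $R_{2}$. The only point the paper treats differently is the applicability of It\^o's formula to $\|\cdot\|_{L^{m+1}}^{m+1}$, which it settles by citation to the stochastic porous-media literature rather than by the regularization argument you sketch.
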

\begin{proof}
 We prove the proposition in two steps, by applying the It\^{o} formula to the function. In the Step 1,  we get an estimate on $\mathbb{E}\Big[\sup_{0\leq t\leq T}\|\rho(t)\|_{{H_{2}^{-1}}(\mathcal{O})}^{2}\Big]  $ depending on the norm of $\xi$, from which we get a condition for $R_1$. In the Step 2, we get an estimate on $\mathbb{E}\Big[\sup_{0\leq t\leq T}\|\rho(t)\|_{L^{m+1}(\mathcal{O})}^{m+1}\Big]  $ depending on the norm of $\xi$ and $R_1$, which gives
us a condition for $R_2$.

Step 1: Applying the It\^{o} formula (\cite{liu2015stochastic}, Theorem 4.2.5) to  $\|\rho(t) \|_{{H_{2}^{-1}}(\mathcal{O})}^{2}$, for $t\in[0,T]$, we have that
  \begin{align}\label{3.6}
\nonumber\|\rho(t) \|_{{H_{2}^{-1}}}^{2}=&\|\rho_0 \|_{{H_{2}^{-1}}}^{2}+2\int_{0}^{t}{_{V^{*}}\langle \Delta  (|\rho(s)|^{m-1}\rho(s)),\rho(s) \rangle_{V}-\chi _{V^{*}}\langle \nabla \cdot (\xi \nabla \Phi*\xi)(s),\rho(s) \rangle_{V}}ds\\
\nonumber&+\int_{0}^{t} \|\sigma\rho(s)\|_{L_2(L^2,H_{2}^{-1})}^{2}ds+2\int_{0}^{t}\langle \rho(s) ,\sigma \rho(s) dW(s) \rangle_{H_{2}^{-1}}\\
\nonumber\leq&\|\rho_0 \|_{{H_{2}^{-1}}}^{2}-2\int_{0}^{t}\|\rho (s)\|_{L^{m+1}}^{m+1}ds+2\chi\int_{0}^{t}\|\xi(s)\|_{L^{2}}\|\nabla \Phi(s)*\xi(s)\|_{L^{\infty}}\|\rho(s)\|_{H_{2}^{-1}}ds\\
&+\sum_{k=1}^{\infty } \int_{0}^{t}\sigma ^{2}\mu_k^2\|\rho (s)e_{k}\|_{{H_{2}^{-1}}}^{2}ds+2\sum_{k=1}^{\infty }\int_{0}^{t}\langle \rho(s) ,\sigma \rho(s)e_{k} \rangle_{H_{2}^{-1}}d\beta_{k}(s).
\end{align}
Taking supremum over $[0,T]$ and the expectation from both sides of (\ref{3.6}), we get \begin{align}\label{3.2.1}
&\nonumber\frac{1}{2} \mathbb{E}\Big[\sup _{0 \leq t \leq T}\|\rho(t)\|_{H_{2}^{-1}}^{2}\Big]-\frac{1}{2} \mathbb{E}\|\rho_{0}\|_{H_{2}^{-1}}^{2}+\mathbb{E}\Big[ \int_{0}^{T}\|\rho (t)\|_{L^{m+1}}^{m+1}dt\Big] \\\nonumber\leq&\chi\mathbb{E}\Big[ \int_{0}^{T}\|\xi(t)\|_{L^{2}}\|\nabla \Phi(t)*\xi(t)\|_{L^{\infty}}\|\rho(t)\|_{H_{2}^{-1}}dt\Big] +C\mathbb{E}\Big[\int_{0}^{T}\|\rho (t)\|_{{H_{2}^{-1}}}^{2}dt\Big]\\&+\mathbb{E}\Big[\sup _{0 \leq t \leq T} \sum_{k=1}^{\infty}\Big| \int_{0}^{t}\langle \rho(s) ,\sigma \rho(s)e_{k} \rangle_{H_{2}^{-1}}d\beta_{k}(s)\Big| \Big].
\end{align}
First,  applying Lemma \ref{lem2.4}  and the Young inequality  to estimate the first term on the right of (\ref{3.2.1}), we get
\begin{align}\label{3.2.3}
\nonumber&\mathbb{E}\Big[ \int_{0}^{T}\|\xi(t)\|_{L^{2}}\|\nabla \Phi(t)*\xi(t)\|_{L^{\infty}}\|\rho(t)\|_{H_{2}^{-1}}dt\Big]\\\nonumber\leq& C\mathbb{E}\Big[ \int_{0}^{T}\|\xi(t)\|_{L^{m+1}}^{2}\|\rho(t)\|_{H_{2}^{-1}}dt\Big]\\\nonumber\leq&\mathbb{E}\Big[\varepsilon \int_{0}^{T}\|\xi(t)\|_{L^{m+1}}^{4}+C\|\rho(t)\|_{H_{2}^{-1}}^{2}dt\Big]\\\leq& \varepsilon T^{\frac{m-3}{m+1}} \mathbb{E}\Big[ \int_{0}^{T}\|\xi(t)\|_{L^{m+1}}^{m+1}dt\Big] ^\frac{
4}{m+1}+C\mathbb{E}\Big[ \int_{0}^{T}\|\rho(t)\|_{H_{2}^{-1}}^{2}dt\Big].
\end{align}
Next, the last term on the right of (\ref{3.2.1}) can be estimated by the Burkholder-Davis-Gundy inequality (\cite{da2014stochastic}, Theorem 4.36) and the Young inequality 
\begin{align}\label{3.2.2}
&\nonumber\mathbb{E}\Big[\sup _{0 \leq t \leq T} \sum_{k=1}^{\infty}\Big| \int_{0}^{t}\langle \rho(s) ,\sigma \rho(s)e_{k} \rangle_{H_{2}^{-1}}d\beta_{k}(s)\Big| \Big]\\
\nonumber\leq& C\mathbb{E}\Big[ \Big( \int_{0}^{T}\|\rho(t)\|_{H_{2}^{-1}}^{2}
\|\sigma\rho(t)\|_{L_2(L^2,H_{2}^{-1})}^{2}dt\Big) ^\frac{1}{2}\Big]\\
\nonumber\leq& C\mathbb{E}\Big[\Big(\sup_{0 \leq t \leq T}\|\rho(t)\|_{H_{2}^{-1}}^2 \Big )^\frac{1}{2}\Big( \int_{0}^{T}\|\sigma \mu_{k}e_{k}\rho(t)\|_{H_{2}^{-1}}^{2}dt\Big)^ \frac{1}{2}\Big]\\
\leq& \frac{1}{4}\mathbb{E}\Big[\sup_{0 \leq t \leq T}\|\rho(t)\|_{H_{2}^{-1}}^{2} \Big] + C\mathbb{E}\Big[  \int_{0}^{T}\|\rho(t)\|_{H_{2}^{-1}}^{2}dt\Big].
\end{align}
Applying the estimates (\ref{3.2.3}) and   (\ref{3.2.2}) to  (\ref{3.2.1}), we get \begin{align*}
&\frac{1}{4} \mathbb{E}\Big[\sup _{0 \leq t \leq T}\|\rho(t)\|_{H_{2}^{-1}}^{2}\Big]+\mathbb{E}\Big[ \int_{0}^{T}\|\rho (t)\|_{L^{m+1}}^{m+1}dt\Big]\\\leq&\frac{1}{2} \mathbb{E}\|\rho_{0}\|_{H_{2}^{-1}}^{2}+\varepsilon T^{\frac{m-3}{m+1}} R_1^\frac{4}{m+1}+C\mathbb{E}\Big[ \int_{0}^{T}\|\rho(t)\|_{H_{2}^{-1}}^{2}dt\Big].
\end{align*}
Using  Gronwall lemma yields that
 \begin{align}\label{3.16}
\nonumber&\mathbb{E}\Big[\sup _{0 \leq t \leq T}\|\rho(t)\|_{H_{2}^{-1}}^{2}\Big]+4\mathbb{E}\Big[ \int_{0}^{T}\|\rho (t)\|_{L^{m+1}}^{m+1}dt\Big]\\\leq&\Big[2 \mathbb{E}\|\rho_{0}\|_{H_{2}^{-1}}^{2}+\varepsilon T^{\frac{m-3}{m+1}} R_{1}^\frac{4}{m+1}\Big] e^{CT}\leq C(T) \Big[ \mathbb{E}\|\rho_{0}\|_{H_{2}^{-1}}^{2}+\varepsilon T^{\frac{m-3}{m+1}}R_{1}^\frac{4}{m+1}\Big] \leq R_1,
\end{align}
where we choose $R_1$  large enough such that the last inequality holds.

Step 2:  Estimate $\|\rho(t) \|_{{L^{m+1}}(\mathcal{O})}^{m+1}$ by the It\^{o}  formula.

Applying the It\^{o}  formula (\cite{2008Stochastic}, Proposition 1.2.3) to  $\|\rho (t) \|_{{L^{m+1}}(\mathcal{O})}^{m+1}$
 and using standard calculation, we obtain 
 \begin{align}\label{3.9}
 	\nonumber\|\rho(t)\|_{L^{m+1} }^{m+1}=&\|\rho_{0}\|_{L^{m+1} }^{m+1} +(m+1)\sigma \sum_{k=1}^{\infty } \int_{0}^{t} \int_{\mathcal{O} }|\rho(s)|^{m-1}\rho^2(s) e_{k}dxd\beta _{k}(s) \\
    \nonumber& -(m+1)\chi\int_{0}^{t} \int_{\mathcal{O} }|\rho(s)|^{m-1}\rho(s) \nabla \cdot(\xi \nabla \Phi*\xi )dxds\\
    \nonumber&+(m+1)\int_{0}^{t} \int_{\mathcal{O} }|\rho(s)|^{m-1}\rho(s) \Delta (|\rho(s)|^{m-1}\rho(s)) dxds\\
    \nonumber&+\frac{\sigma ^{2} }{2} m(m+1)\sum_{ k=1}^{\infty} \int_{0}^{t} \int_{\mathcal{O} }|\rho(s)|^{m-1} |\rho(s) e_{k}|^{2}dxds \\ 
 	\nonumber\le& \|\rho_0\|_{L^{m+1} }^{m+1}+(m+1)\sigma \sum_{k=1}^{\infty }\int_{0}^{t} \int_{\mathcal{O} }|\rho(s)|^{m+1} e_{k}dxd\beta_{k}(s)\\
    \nonumber& +(m+1)\chi\int_{0}^{t} \int_{\mathcal{O} }\nabla(|\rho(s)|^{m-1} \rho(s)) \xi(\nabla \Phi*\xi )dxds\\
 	&-m^{2} (m+1)\int_{0}^{t} \int_{\mathcal{O} }\rho^{2m-2}(s) |\nabla \rho (s)|^{2} dxds+C(\sigma ,m)\int_{0}^{t} \|\rho(s)\|_{L^{m+1} }^{m+1}ds.
 \end{align}
 The third term on the right of (\ref{3.9}) is done with the H\"{o}lder inequality, we  get
 \begin{align}\label{3.10}
 \nonumber&(m+1)\chi\int_{\mathcal{O} } \nabla(|\rho(s)|^{m-1} \rho(s)) \xi(\nabla \Phi*\xi )dx\\
 \nonumber\le& m(m+1)\chi\||\rho(s)|^{m-1} \nabla \rho \|_{L^{2}  }\|\xi (\nabla \Phi*\xi )\|_{L^{2}  } \\ \le&\frac{m^2(m+1) }{2}\||\rho(s)|^{m-1} \nabla \rho \|^{2}_{L^{2}  }+C(\chi )\|\xi\|^{2} _{L^{2}}\|\nabla \Phi*\xi\|^{2} _{L^{\infty }}. 
 \end{align}
 Furthermore, putting (\ref{3.10}) into (\ref{3.9}),   we obtain 
 \begin{align}\label{3.113}
 \nonumber&\|\rho(t)\|_{L^{m+1}  }^{m+1} +\frac{m^2(m+1) }{2}\int_{0}^{t}\||\rho(s)|^{m-1} \nabla \rho \|^{2}_{L^{2}  }ds\\
 \nonumber\le& \|\rho_{0} \|_{L^{m+1}  }^{m+1}+C(\chi )\int_{0}^{t}\|\xi(s)\|^{2} _{L^{2} }\|\nabla \Phi*\xi\|^{2} _{L^{\infty }}ds+C(\sigma ,m)\int_{0}^{t}\|\rho(s) \|_{L^{m+1}  }^{m+1}ds\\
 &
 +(m+1)\sigma \sum_{k=1}^{\infty }\int_{0}^{t} \int_{\mathcal{O} }|\rho(s)|^{m+1} e_{k}dxd\beta_{k}(s).
 \end{align}
 Taking supremum over $[0,T]$ and the expectation for (\ref{3.113}), it is easy to see that
 \begin{align}\label{3.2.4}
 &\nonumber\mathbb{E}\Big[\sup_{0\le t\le T}\|\rho(t)\|_{L^{m+1} }^{m+1}\Big ]+\frac{m^2(m+1) }{2} \mathbb{E}\Big [\int_{0}^{T} \||\rho(t)|^{m-1} \nabla \rho \|^{2}_{L^{2}  }dt \Big]\\\nonumber
 	\le& \mathbb{E}\Big[\|\rho_{0} \|_{L^{m+1} }^{m+1}\Big]+C(\chi )\mathbb{E}\Big[\int_{0}^{T}\|\xi(t)\|^{2} _{L^{2} }\|\nabla \Phi*\xi\|^{2} _{L^{\infty }}dt\Big]+C(\sigma ,m)\mathbb{E}\Big[\int_{0}^{T}\|\rho(t) \|_{L^{m+1} }^{m+1}dt\Big]\\&+(m+1)\sigma \mathbb{E}\Big[\sup_{0\le t\le T}\sum_{k=1}^{\infty }\Big| \int_{0}^{t} \int_{\mathcal{O} }|\rho(s)|^{m+1} e_{k}dxd\beta_{k}(s)\Big|  \Big]. 
    \end{align}
 For the last term on the right of (\ref{3.2.4}),  using the Burkholder–Davis–Gundy inequality and the Young inequality, we get  
 \begin{align}\label{3.2.5}
 &\nonumber(m+1)\sigma \mathbb{E}\Big[\sup_{0\le t\le T}\sum_{k=1}^{\infty }\Big| \int_{0}^{t} \int_{\mathcal{O} }|\rho(s)|^{m+1} e_{k}dxd\beta_{k}(s)\Big|  \Big]\\\nonumber\le &C \mathbb{E}\Big[\Big( \int_{0}^{T} \Big( \int_{\mathcal{O} }|\rho(t)|^{m+1}dx\Big) ^{2} dt\Big) ^{\frac{1}{2} }  \Big]\\\nonumber
 	\le & C \mathbb{E}\Big[\Big( \int_{0}^{T} \|\rho(t)\|_{L^{m+1} }^{m+1}\|\rho(t)\|_{L^{m+1} }^{m+1}dt\Big) ^{\frac{1}{2} }  \Big]\\\nonumber\le& C \mathbb{E}\Big[\Big(\sup_{0\le t\le T}\|\rho(t)\|_{L^{m+1} }^{m+1}\Big)^{\frac{1}{2}}\Big( \int_{0}^{T} \|\rho(t)\|_{L^{m+1} }^{m+1}dt\Big)^{\frac{1}{2} }   \Big]\\\le& \frac{1}{2}  \mathbb{E}\Big[\sup_{0\le t\le T}\|\rho(t)\|_{L^{m+1} }^{m+1}\Big]+C\mathbb{E}\Big[\int_{0}^{T} \|\rho(t)\|_{L^{m+1} }^{m+1}dt  \Big].
 \end{align}
 Bring the estimate (\ref{3.2.5}) into (\ref{3.2.4}) and using  Lemma \ref{lem2.4}, we get
 \begin{align*}
 	&\frac{1}{2}  \mathbb{E}\Big[\sup_{0\le t\le T}\|\rho(t)\|_{L^{m+1} }^{m+1}\Big]+\frac{m^2(m+1) }{2} \mathbb{E}\Big [\int_{0}^{T} \||\rho(t)|^{m-1} \nabla \rho \|^{2}_{L^{2}  }dt \Big]\\\le&   \mathbb{E}\Big[\|\rho_0\|_{L^{m+1} }^{m+1}\Big]+C(\chi)\mathbb{E}\Big[\int_{0}^{T} \|\xi\|_{L^{m+1} }^{4} dt \Big]+C(\sigma ,m)\mathbb{E}\Big[\int_{0}^{T} \|\rho(s)\|_{L^{m+1} }^{m+1} dt \Big]\\\le&   \mathbb{E}\Big[\|\rho_0\|_{L^{m+1} }^{m+1}\Big]+C(\chi,T)R_{1}^\frac{4}{m+1}+C(\sigma ,m)\mathbb{E}\Big[\int_{0}^{T} \|\rho(s)\|_{L^{m+1} }^{m+1} dt \Big].
 \end{align*}
Using Gronwall lemma, we know that\begin{align}\label{3.17}
 \nonumber&\mathbb{E}\Big[\sup_{0\le t\le T}\|\rho(t)\|_{L^{m+1} }^{m+1}\Big]+m^2(m+1) \mathbb{E}\Big [\int_{0}^{T} \||\rho(t)|^{m-1} \nabla \rho \|^{2}_{L^{2}  }dt \Big]\\\le&   \Big[2\mathbb{E}\|\rho_{0} \|_{L^{m+1} }^{m+1}+C(\chi,T)R_{1}^\frac{4}{m+1}+C(\sigma ,m)\Big]e^{CT} \le R_{2},
 \end{align}
where we choose $R_2$  large enough such that the last inequality holds.
 
 Combining the inequalities (\ref{3.16}) and (\ref{3.17}), we have shown that there exist $R_1,~R_2 > 0$ such that $\mathcal{T}$ maps $X_{T}$ in itself. This completes the proof of Proposition 3.2.
\end{proof}
Next, we prove the continuity of the solution operator $\mathcal{T}$.  
\begin{prop}\label{prop3}
For all $\xi_1,\xi_2\in X_T$, let $\rho_1:=\mathcal{T}(\xi_1)$, $\rho_2:=\mathcal{T}(\xi_2)$ be the corresponding solutions of the model (\ref{eqn5}) with the same initial data $\rho_0$.  Then there exist some $\delta > 0$ and a constant $C > 0$ such that \begin{align}\label{3.33}
	\|\mathcal{T}(\xi_1)-\mathcal{T}(\xi_2)\|_{S_{T}}\leq C\|\xi_1-\xi_2\|_{S_{T}}^{\delta}.
\end{align}
\end{prop}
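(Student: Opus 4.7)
The plan is to set $\eta:=\rho_1-\rho_2$ (so $\eta(0)=0$) and apply the It\^o formula to $\|\eta(t)\|_{H_2^{-1}}^2$ exactly as in the proof of Proposition~\ref{Prop2}. Writing
\[
G:=\xi_1\nabla\Phi*\xi_1-\xi_2\nabla\Phi*\xi_2,
\]
the porous-media difference contributes the favourable quantity $-2\|\eta\|_{L^{m+1}}^{m+1}$ (the same monotonicity already used to verify $(H2)$ in Proposition~\ref{prop1}), the linear noise contributes at most $C\|\eta\|_{H_2^{-1}}^2$ by \eqref{normphi}, and the chemotactic difference enters through $|{}_{V^*}\langle\nabla\cdot G,\eta\rangle_V|\le C\|G\|_{L^2}\|\eta\|_{H_2^{-1}}$. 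Controlling the stochastic integral by the Burkholder--Davis--Gundy inequality (as in \eqref{3.2.2}), taking $\sup_{[0,T]}$ and expectations, absorbing the $\|\eta\|_{H_2^{-1}}^2$ term, and applying Gronwall would reduce \eqref{3.33} to showing
\[
\mathbb{E}\!\int_0^T\!\|G(t)\|_{L^2}^2\,dt\;\le\;C\,\|\xi_1-\xi_2\|_{S_T}^{2\delta}
\]
for some $\delta>0$.

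For the $G$-estimate I would use the symmetric decomposition
$G=(\xi_1-\xi_2)\nabla\Phi*\xi_1+\xi_2\nabla\Phi*(\xi_1-\xi_2)$
together with a sharper convolution bound than Lemma~\ref{lem2.4}, namely $\|\nabla\Phi*f\|_{L^\infty}\le C\|f\|_{L^{p_0}}$ with some $p_0<m+1$: for the Newtonian kernel this follows from the weak Young inequality with $p_0=d\in\{2,3\}$, and for the Bessel kernel it follows from H\"older combined with \eqref{2.3} for any $p_0>2$. Using $\|\xi_2\|_{L^2}\le C\|\xi_2\|_{L^{m+1}}$ and, when $p_0>2$, the bounded-domain embedding $L^{p_0}\hookrightarrow L^2$ to unify the exponents, one obtains the pointwise bound
\[
\|G(t)\|_{L^2}\;\le\;C\bigl(\|\xi_1(t)\|_{L^{m+1}}+\|\xi_2(t)\|_{L^{m+1}}\bigr)\,\|\xi_1(t)-\xi_2(t)\|_{L^{p_0}}.
\]
The crux is then the Gagliardo--Nirenberg-type interpolation
\[
\|\xi_1-\xi_2\|_{L^{p_0}}\;\le\;C\,\|\xi_1-\xi_2\|_{L^{m+1}}^{\theta}\,\|\xi_1-\xi_2\|_{H_2^{-1}}^{1-\theta},
\]
which is a standard consequence of the chain $L^{m+1}(\mathcal{O})\hookrightarrow L^{p_0}(\mathcal{O})\hookrightarrow H_2^{-1}(\mathcal{O})$ on the bounded domain $\mathcal{O}$ with $d\le 3$; scaling fixes $\theta=\theta(m,d,p_0)\in(0,1)$ explicitly. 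Substituting this back, using H\"older in $(t,\omega)$ to match the $L^{m+1}$ factors against the a priori $X_T$-bounds $R_1,R_2$ from Proposition~\ref{Prop2}, and invoking Jensen (permitted because $2(1-\theta)\le 2$) to pull the exponent outside the expectation would extract the factor $\|\xi_1-\xi_2\|_{S_T}^{2(1-\theta)}$, yielding \eqref{3.33} with $\delta:=1-\theta$.

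The main obstacle is exactly that the chemotactic cross term carries no monotonicity, so the $L^{m+1}$-norm of $\xi_1-\xi_2$ appears unavoidably in any naive $\|G\|_{L^2}$-estimate and \emph{cannot} be made small in the $S_T$-topology on its own. Bridging the gap forces both (i) sharpening Lemma~\ref{lem2.4} to a convolution exponent $p_0<m+1$, and (ii) using a genuine interpolation inequality with a strictly positive $1-\theta$; carefully balancing the choice of $p_0$ against the H\"older exponents in $(t,\omega)$ so that every norm other than the $H_2^{-1}$-one remains controlled by $R_1$ or $R_2$ is where essentially all of the technical bookkeeping lies.
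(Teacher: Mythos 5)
Your overall architecture (It\^o formula for $\|\rho_1-\rho_2\|_{H_2^{-1}}^2$, monotonicity of the porous-media term, BDG for the martingale part, the symmetric decomposition of the chemotactic difference, Gronwall, and the reduction to a H\"older-type estimate in $\|\xi_1-\xi_2\|$) matches the paper. But the step that is supposed to produce the exponent $\delta$ is a genuine gap: the interpolation inequality
\[
\|\xi_1-\xi_2\|_{L^{p_0}}\;\le\;C\,\|\xi_1-\xi_2\|_{L^{m+1}}^{\theta}\,\|\xi_1-\xi_2\|_{H_2^{-1}}^{1-\theta},\qquad \theta<1,
\]
is \emph{false}, and it is not a consequence of the embedding chain $L^{m+1}\hookrightarrow L^{p_0}\hookrightarrow H_2^{-1}$. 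A chain of continuous embeddings never yields an interpolation inequality by itself; one needs the middle space to be an actual interpolation space between the two endpoints. Here it is not: for $f_N=\sin(Nx)$ (suitably cut off in $\mathcal{O}$) one has $\|f_N\|_{L^{p_0}}\sim\|f_N\|_{L^{m+1}}\sim 1$ while $\|f_N\|_{H_2^{-1}}\sim N^{-1}\to 0$, so the right-hand side vanishes while the left-hand side does not, for any $\theta<1$ and any $p_0\ge 1$. In terms of smoothness indices, interpolating between $H^0_{m+1}=L^{m+1}$ and $H^{-1}_2$ can only land in $L^{p_0}=H^0_{p_0}$ when the weight on the $H^{-1}_2$ factor is zero, i.e.\ $\delta=1-\theta=0$. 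Since your entire estimate of $\mathbb{E}\int_0^T\|G\|_{L^2}^2\,dt$ funnels through this inequality, the argument does not close.

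The paper's way around this is precisely the ingredient your proposal never uses: the second bound defining $X_T$, namely $\mathbb{E}\int_0^T\||\xi|^{m-1}\nabla\xi\|_{L^2}^2\,dt\le R_2$, which provides \emph{positive} Sobolev regularity $\xi_i\in H^{\gamma}_{2m}$ for some $\gamma\in(0,\tfrac1m)$. One then interpolates with a positive-order space on the strong side, $[H_2^{\gamma},H_2^{-1}]_{\theta}=L^2$ with $\gamma(1-\theta)-\theta=0$, i.e.\ $\theta=\gamma/(1+\gamma)>0$, so that
\[
\|\xi_1-\xi_2\|_{L^2}\le\|\xi_1-\xi_2\|_{H_2^{-1}}^{\theta}\,\|\xi_1-\xi_2\|_{H_2^{\gamma}}^{1-\theta},
\]
and the $H_2^{\gamma}$ factor is absorbed into $R_2$ rather than into the $S_T$-distance. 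The positive $\gamma$ is what pays for the $-1$ derivative of the $H_2^{-1}$ norm and makes $\delta=\theta>0$ possible; without invoking the gradient bound there is no source of compactness to convert $H_2^{-1}$-smallness into $L^{p_0}$-smallness. (Your sharpened convolution bound $\|\nabla\Phi*f\|_{L^\infty}\le C\|f\|_{L^{p_0}}$ with $p_0<m+1$ is fine but does not address this, since $L^{p_0}$-smallness is exactly what cannot be extracted from $S_T$-smallness alone.)
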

\begin{proof}
Since $\rho_1$ and $\rho_2$ are solutions of the model (\ref{eqn5}),  we can easily deduce that $\rho_1- \rho_2$ satisﬁes
\begin{align*}
	d(\rho_1- \rho_2) =& [\Delta( |\rho_1|^{m-1}\rho_1-|\rho_2|^{m-1}\rho_2) - \chi \nabla  \cdot (\xi_1( \nabla \Phi*\xi_1)-\xi_2( \nabla \Phi*\xi_2))]dt\\& + \sigma (\rho_1- \rho_2)dW(t).
\end{align*}
 Using It\^{o} formula to $\|(\rho_1- \rho_2) (t)\|_{{H_{2}^{-1}}(\mathcal{O})}^{2}$, we obtain\begin{align}\label{3.18}
\nonumber&\frac{1}{2}\| (\rho_1- \rho_2)(t) \|_{{H_{2}^{-1}}}^{2}\\
\nonumber=&\int_{0}^{t}{_{V^{*}}\langle \Delta( |\rho_1|^{m-1}\rho_1-|\rho_2|^{m-1}\rho_2),\rho_1- \rho_2 \rangle_{V}}ds\\
\nonumber&-\chi\int_{0}^{t}{ _{V^{*}}\langle \nabla  \cdot (\xi_1( \nabla \Phi*\xi_1)-\xi_2( \nabla \Phi*\xi_2)), \rho_1- \rho_2 \rangle_{V}}ds\\
\nonumber&+\frac{1}{2}\int_{0}^{t} \|\sigma(\rho_1- \rho_2) \|_{L_2(L^2,H_{2}^{-1})}^{2}ds+\int_{0}^{t}\langle \rho_1- \rho_2 ,\sigma (\rho_1- \rho_2)dW(s) \rangle_{H_{2}^{-1}}\\
\nonumber\leq&-\int_{0}^{t}\| \rho_1- \rho_2\|_{L^{m+1}}^{m+1}ds+\frac{1}{2}\sum_{k=1}^{\infty } \int_{0}^{t}\sigma ^{2}\mu_k^2\|(\rho_1- \rho_2) e_{k}\|_{{H_{2}^{-1}}}^{2}ds\\
\nonumber&-\chi\int_{0}^{t}{ _{V^{*}}\langle \nabla  \cdot (\xi_1( \nabla \Phi*\xi_1)-\xi_2( \nabla \Phi*\xi_2)), \rho_1- \rho_2\rangle_{V}}ds\\&+\sum_{k=1}^{\infty }\int_{0}^{t}\langle \rho_1- \rho_2 ,\sigma (\rho_1- \rho_2)e_{k} \rangle_{H_{2}^{-1}}d\beta_{k}(s).
\end{align}
Taking supremum over $ [0, T]$ and the expectation from both sides of (\ref{3.18}), we obtain
\begin{align*}
&\frac{1}{2} \mathbb{E}\Big[\sup _{0 \leq t \leq T}\|(\rho_1- \rho_2)(t)\|_{H_{2}^{-1}}^{2}\Big]+\mathbb{E}\Big[ \int_{0}^{T}\|(\rho_1- \rho_2)(t)\|_{L^{m+1}}^{m+1}dt\Big] \\\leq&\chi\mathbb{E}\Big[ \int_{0}^{T}{\left| _{V^{*}}\left\langle  \nabla  \cdot (\xi_1( \nabla \Phi*\xi_1)-\xi_2( \nabla \Phi*\xi_2))(t), (\rho_1- \rho_2)(t) \right\rangle _{V}\right|dt }\Big] \\&+\mathbb{E}\Big[\sup _{0 \leq t \leq T} \sum_{k=1}^{\infty}\Big| \int_{0}^{t}\langle (\rho_1- \rho_2)(s) ,\sigma (\rho_1- \rho_2)(s)e_{k} \rangle_{H_{2}^{-1}}d\beta_{k}(s)\Big| \Big]\\&+C\mathbb{E}\Big[\int_{0}^{T}\|(\rho_1- \rho_2) (t)\|_{{H_{2}^{-1}}}^{2}\Big]dt\\=:&J_1(t)+J_2(t)+J_3(t).
\end{align*}

Now we deal with $J_1(t)$. First, we divide the term into the following sum
\begin{align*}
J_1(t)\leq&\chi\mathbb{E}\Big[ \int_{0}^{T}{| _{V^{*}}\langle \nabla  \cdot ((\xi_1-\xi_2) (\nabla \Phi*\xi_1)), \rho_1- \rho_2\rangle _{V}|dt }\Big]\\&+\chi\mathbb{E}\Big[ \int_{0}^{T}{| _{V^{*}}\langle \nabla  \cdot (\xi_2 (\nabla \Phi*\xi_1-\nabla \Phi*\xi_2)), \rho_1- \rho_2\rangle _{V}|dt }\Big]\\=:&I_1(t)+I_2(t).
\end{align*}
Next,  using (\ref{3.1}) and the Sobolev embedding $L^1(\mathcal{O})\subset{H_{\frac{m+1}{m}}^{-1}}(\mathcal{O}),~m+1>d$ to analyze  $I_1(t)$, we obtain that
\begin{align*}
I_1(t)&\leq\chi\mathbb{E}\Big[ \int_{0}^{T}|\int_\mathcal{O}(-\Delta)^{-1}\nabla  \cdot ((\xi_1-\xi_2) (\nabla \Phi*\xi_1))(\rho_1- \rho_2)dx|dt\Big]\\&\leq\chi\mathbb{E}\Big[ \int_{0}^{T}\|(-\Delta)^{-1}\nabla  \cdot ((\xi_1-\xi_2) (\nabla \Phi*\xi_1))\|_{L^{\frac{m+1}{m}}}\|\rho_1- \rho_2\|_{L^{m+1}}dt\Big]\\&\leq C \mathbb{E}\Big[ \int_{0}^{T}\|\nabla  \cdot ((\xi_1-\xi_2) (\nabla \Phi*\xi_1))\|_{H_{\frac{m+1}{m}}^{-2}}\|\rho_1- \rho_2\|_{L^{m+1}}dt\Big]\\&\leq C \mathbb{E}\Big[ \int_{0}^{T}\|(\xi_1-\xi_2) (\nabla \Phi*\xi_1)\|_{H_{\frac{m+1}{m}}^{-1}}\|\rho_1- \rho_2\|_{L^{m+1}}dt\Big]\\&\leq C \mathbb{E}\Big[ \int_{0}^{T}\|(\xi_1-\xi_2) (\nabla \Phi*\xi_1)\|_{L^1}\|\rho_1- \rho_2\|_{L^{m+1}}dt\Big]\\&\leq C \mathbb{E}\Big[ \int_{0}^{T}\|\xi_1-\xi_2\|_{L^2}\|\nabla \Phi*\xi_1\|_{L^2}\|\rho_1- \rho_2\|_{L^{m+1}}dt\Big]\\&\leq C \mathbb{E}\Big[ \int_{0}^{T}\|\xi_1-\xi_2\|_{L^2}\|\xi_1\|_{L^{m+1}}\|\rho_1- \rho_2\|_{L^{m+1}}dt\Big].
\end{align*}
Applying complex interpolation [\cite{2022Martingale}, p.21], we get for some  $\gamma \in\Big(0, \frac{1}{m}\Big)$, \begin{align*}
[H_{2}^{\gamma}(\mathcal{O}), H_{2}^{-1}(\mathcal{O})]_{\theta}=L^{2}(\mathcal{O})
\end{align*}
with  $\gamma(1-\theta)+\theta(-1)=0$. Thus, we know
\begin{align*}
\|\xi_{1}-\xi_{2}\|_{L^{2}} \leq\|\xi_{1}-\xi_{2}\|_{H_{2}^{-1}}^{\theta}\|\xi_{1}-\xi_{2}\|_{H_{2}^{\gamma}}^{1-\theta}. 
\end{align*}
This gives  $\theta=\frac{\gamma}{1+\gamma}<\frac{1}{1+m}$. Furthermore,  using the Young inequality  and the Jensen inequality,  we have that
\begin{align*}
	I_1(t)\leq&C(\varepsilon)\mathbb{E}\Big[ \int_{0}^{T}\|\xi_{1}-\xi_{2}\|_{H_{2}^{-1}}^{\frac{m+1}{m}\theta}\|\xi_{1}-\xi_{2}\|_{H_{2}^{\gamma}}^{\frac{m+1}{m}(1-\theta)} \|\xi_1\|_{L^{m+1}}^{\frac{m+1}{m}}dt\Big]+\varepsilon\mathbb{E}\Big[ \int_{0}^{T}\|\rho_1- \rho_2\|_{L^{m+1}}^{m+1}dt\Big]\\
    \leq&C(\varepsilon)\mathbb{E}\Big[ \Big(\sup _{0 \leq t \leq T}\|\xi_{1}-\xi_{2}\|_{H_{2}^{-1}}^{\frac{m+1}{m}\theta }\Big )\Big(\int_{0}^{T}\|\xi_{1}-\xi_{2}\|_{H_{2}^{\gamma}}^{\frac{m+1}{m}(1-\theta)}dt\Big)\Big(\sup _{0 \leq t \leq T}\|\xi_{1}\|_{L^{m+1}}^{\frac{m+1}{m}}\Big)\Big]\\&+\varepsilon\mathbb{E}\Big[ \int_{0}^{T}\|\rho_1- \rho_2\|_{L^{m+1}}^{m+1}dt\Big]\\
    \leq&C(\varepsilon,T)\mathbb{E}\Big[ \sup _{0 \leq t \leq T}\|\xi_{1}-\xi_{2}\|_{H_{2}^{-1}}^{\frac{m+1}{m}\theta p}\Big]^{\frac{1}{p}}\mathbb{E}\Big[\int_{0}^{T}\|\xi_{1}-\xi_{2}\|_{H_{2}^{\gamma}}^{\frac{m+1}{m}(1-\theta)q}dt\Big]^\frac{1}{q}\\
    &\cdot \mathbb{E}\Big[ \sup _{0 \leq t \leq T}\|\xi_{1}\|_{L^{m+1}}^{\frac{m+1}{m}r}\Big]^\frac{1}{r}+\varepsilon\mathbb{E}\Big[ \int_{0}^{T}\|\rho_1- \rho_2\|_{L^{m+1}}^{m+1}dt\Big]\\
    \leq&C(\varepsilon,T)\mathbb{E}\Big[ \sup _{0 \leq t \leq T}\|\xi_{1}-\xi_{2}\|_{H_{2}^{-1}}^{2}\Big]^{\frac{(m+1)\theta}{2m}}\mathbb{E}\Big[\int_{0}^{T}(\|\xi_{1}\|_{H_{2}^{\gamma}}+\|\xi_{2}\|_{H_{2}^{\gamma}})^{2m}dt\Big]^\frac{(m+1)(1-\theta)}{2m^2}\\
    &\cdot \mathbb{E}\Big[ \sup _{0 \leq t \leq T}\|\xi_{1}\|_{L^{m+1}}^{m+1}\Big]^\frac{1}{m}+\varepsilon\mathbb{E}\Big[ \int_{0}^{T}\|\rho_1- \rho_2\|_{L^{m+1}}^{m+1}dt\Big],
\end{align*}
where $p=2m$, $q=2$ and $r=\frac{2m}{m-1}$ satisfiy $\frac{1}{p}+\frac{1}{q}+\frac{1}{r}=1$. 
According to the proposition  of the reference  (\cite{2022Martingale}, p.10), we obtain 
\begin{align*}
\mathbb{E}\Big[\int_{0}^{T}&(\|\xi_{1}\|_{H_{2}^{\gamma}}+\|\xi_{2}\|_{H_{2}^{\gamma}})^{2m}\Big]dt\leq C\mathbb{E}\Big[\int_{0}^{T}(\|\xi_{1}\|_{H_{2m}^{\gamma}}^{2m}+\|\xi_{2}\|_{H_{2m}^{\gamma}}^{2m})\Big]dt\\&\leq C\mathbb{E}\Big[\int_{0}^{T}\||\xi_{1}|^{m-1}\nabla \xi_{1}\|_{L^{2}}^{2}\Big] d t+C\mathbb{E}\Big[\int_{0}^{T}\||\xi_{2}|^{m-1} \nabla \xi_{2}\|_{L^{2}}^{2} \Big]d t\leq CR_2.
\end{align*}
This gives
\begin{align*}
	I_1(t)&\leq C R_{2}^\frac{(m+1)(1-\theta)}{2m^2}R_{1}^{^\frac{1}{m}}\mathbb{E}\Big[ \sup _{0 \leq t \leq T}\|\xi_{1}-\xi_{2}\|_{H_{2}^{-1}}^{2}\Big]^{\theta}+\varepsilon\mathbb{E}\Big[ \int_{0}^{T}\|\rho_1- \rho_2\|_{L^{m+1}}^{m+1}dt\Big],
\end{align*}
where $\varepsilon=1$.
Next, we estimate $I_2(t)$ by Lemma \ref{lem2.4} and  the Sobolev embedding theorem
\begin{align*}
I_2(t)\leq &C\mathbb{E}\Big[ \int_{0}^{T}\|\nabla  \cdot( \xi_2 \nabla \Phi*(\xi_1-\xi_2))\| _{H_{2}^{-1}}\|\rho_1- \rho_2\| _{H_{2}^{-1}}dt\Big]\\
\leq& C \mathbb{E}\Big[ \int_{0}^{T}\|\xi_2\|_{L^{2}}\|\nabla \Phi(t)*(\xi_1-\xi_2)\|_{L^{\infty}}\|\rho_1- \rho_2\|_{H_{2}^{-1}}dt\Big]\\
\leq& C\mathbb{E}\Big[ \int_{0}^{T}\|\xi_2\|_{L^{m+1}}\|\xi_1-\xi_2\|_{L^{m+1}}\|\rho_1- \rho_2\|_{H_{2}^{-1}}dt\Big]\\
\leq& \varepsilon \mathbb{E}\Big[ \sup _{0 \leq t \leq T}\|\xi_2\|_{L^{m+1}}^{2}\int_{0}^{T}\|\xi_1-\xi_2\|_{L^{m+1}}^{2}dt\Big]+C(\varepsilon)\mathbb{E}\Big[ \int_{0}^{T}\|\rho_1- \rho_2\|_{H_{2}^{-1}}^{2}dt\Big]\\
\leq&\varepsilon \mathbb{E}\Big[\sup _{0 \leq t \leq T} \|\xi_2\|_{L^{m+1}}^{m+1}\Big]^\frac{2}{m-1}\mathbb{E}\Big[\int_{0}^{T}\|\xi_1-\xi_2\|_{L^{m+1}}^{m+1}dt\Big]^\frac{2}{m+1}\\
&+C(\varepsilon)\mathbb{E}\Big[ \int_{0}^{T}\|\rho_1- \rho_2\|_{H_{2}^{-1}}^{2}dt\Big]\\
\leq& \varepsilon R_{2}^\frac{2}{m-1}R_{1}^{^\frac{2}{m+1}}+C(\varepsilon)\mathbb{E}\Big[ \int_{0}^{T}\|\rho_1- \rho_2\|_{H_{2}^{-1}}^{2}dt\Big].
\end{align*}

We now estimate $J_2(t)$. By the Burkholder-Davis-Gundy inequality and the Young inequality, we get
\begin{align*}
J_2(t)&\leq C\mathbb{E}\Big[ \Big( \int_{0}^{T}\|\rho_1-\rho_2\|_{H_{2}^{-1}}^{2}
	\|\sigma(\rho_1- \rho_2)\|_{L_2(L^2(\mathcal{O}),H_{2}^{-1}(\mathcal{O}))}^{2}dt\Big) ^\frac{1}{2}\Big]\\
    \nonumber& \leq C\mathbb{E}\Big[\Big (\sup_{0 \leq t \leq T}\|\rho_1-\rho_2\|_{H_{2}^{-1}}^{2}\Big)^{\frac{1}{2}} \Big( \int_{0}^{T}\|\sigma(\rho_1-\rho_2)\|_{H_{2}^{-1}}^{2}dt\Big)^ \frac{1}{2}\Big]\\
    &\leq \frac{1}{4}\mathbb{E}\Big[\sup_{0 \leq t \leq T}\|\rho_1-\rho_2\|_{H_{2}^{-1}}^{2} \Big] + C\sigma^{2}\mathbb{E}\Big[  \int_{0}^{T}\|\rho_1-\rho_2\|_{H_{2}^{-1}}^{2}dt\Big].
\end{align*}

Combining the estimates of $J_1(t)$ and $J_2(t) $, we deduce that
\begin{align*}
&\frac{1}{4} \mathbb{E}\Big[\sup _{0 \leq t \leq T}\|(\rho_1- \rho_2)(t)\|_{H_{2}^{-1}}^{2}\Big]\\
\leq& R_{2}^\frac{(m+1)(1-\theta)}{2m^2}R_{1}^{^\frac{1}{m}}\mathbb{E}\Big[ \sup _{0 \leq t \leq T}\|\xi_{1}-\xi_{2}\|_{H_{2}^{-1}}^{2}\Big]^{\theta}+\varepsilon R_{2}^\frac{2}{m+1}R_{1}^{^\frac{2}{m+1}}+C(\sigma,\chi)\mathbb{E}\Big[ \int_{0}^{T}\|\rho_{1}- \rho_{2}\|_{H_{2}^{-1}}^{2}dt\Big].
\end{align*}
Futhermore, by Gronwall lemma, we get
\begin{align*}
\mathbb{E}\Big[\sup _{0 \leq t \leq T}\|\rho_1- \rho_2\|_{H_{2}^{-1}}^{2}\Big]\leq C(R_1,R_2,T)\mathbb{E}\Big[ \sup _{0 \leq t \leq T}\|\xi_{1}-\xi_{2}\|_{H_{2}^{-1}}^{2}\Big]^{\theta}.
\end{align*}
Taking $\delta=\theta$, we have (\ref{3.33}) holds. This completes the proof on the continuity of the solution operator $\mathcal{T}$.
\end{proof}
 Finally, we prove that $\mathcal{T}$ is a compact operator.  Using the Ascoli-Arzel\`a theorem, we give that $\mathcal{T}$ maps $X_T$ to a precompact set.
\begin{prop}\label{prop4}
For any $\rho_0\in L^{2}(\Omega ,\mathcal{F}_{0} ,H_{2}^{-1}(\mathcal{O} ))$ satisfying $\mathbb{E}\Big[\|\rho_{0}\|_{L^{m+1}}^{m+1}\Big]<\infty$, and all $R_{1} > 0$ and $R_{2} > 0$, it holds that
\item[(i)]  there exists $C = C(T,m)> 0$ such that for any $\xi \in X_{T}$, we have
\begin{align*}
\sup _{0 \leq t \leq T}\mathbb{E}\|\mathcal{T}(\xi)\|_{L^{m+1}}^{m+1}\leq C\mathbb{E}\Big[  \int_{0}^{T}\|\xi\|_{L^{m+1}}^{m+1}dt\Big]. 
\end{align*}
\item[(ii)] 
there exists a constant $C=C( T, m, R_{1}, R_{2})>0$  such that for any  $0\leq t_{1}<t_{2}\leq T$  and  $\xi\in X_{T}$
we have
$$
\mathbb{E}\|\mathcal{T} \xi(t_{1})-\mathcal{T} \xi(t_{2})\|_{H_{2}^{-1}}^{2} \leq C|t_{1}-t_{2}|.
$$
\end{prop}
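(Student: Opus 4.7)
The plan is to establish these two quantitative bounds, which together constitute a Kolmogorov-style tightness criterion and will yield compactness of $\mathcal{T}$ on $X_T$ via an Ascoli--Arzel\`a-type argument. Both estimates rely on It\^{o} calculus applied to $\rho=\mathcal{T}\xi$ together with the a~priori bounds $R_1, R_2$ that are available because $\xi\in X_T$.

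For (i), I would repeat the It\^{o} computation carried out in Step~2 of Proposition~\ref{Prop2} on $\|\rho(t)\|_{L^{m+1}}^{m+1}$, but take only expectation (not the supremum in time), so that the martingale term drops without needing BDG. Discarding the non-negative integral of $\frac{m^{2}(m+1)}{2}\||\rho|^{m-1}\nabla\rho\|_{L^{2}}^{2}$, estimating $\|\nabla\Phi\ast\xi\|_{L^{\infty}}\le C\|\xi\|_{L^{m+1}}$ via Lemma~\ref{lem2.4}, and exploiting $m\ge 3$ (so $4\le m+1$) together with Young's inequality to dominate $\|\xi\|_{L^{m+1}}^{4}$ by a multiple of $1+\|\xi\|_{L^{m+1}}^{m+1}$, I would arrive at
\begin{equation*}
\mathbb{E}\|\rho(t)\|_{L^{m+1}}^{m+1}\le C\,\mathbb{E}\|\rho_{0}\|_{L^{m+1}}^{m+1}+C\,\mathbb{E}\int_{0}^{T}\|\xi(s)\|_{L^{m+1}}^{m+1}\,ds+C\int_{0}^{t}\mathbb{E}\|\rho(s)\|_{L^{m+1}}^{m+1}\,ds.
\end{equation*}
Gronwall's lemma followed by taking the supremum in $t\in[0,T]$ then closes (i), with the constant from $\mathbb{E}\|\rho_{0}\|_{L^{m+1}}^{m+1}$ absorbed into the constant $C$.

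For (ii), I would start from the integral form
\begin{equation*}
\mathcal{T}\xi(t_{2})-\mathcal{T}\xi(t_{1})=\int_{t_{1}}^{t_{2}}\bar{A}(\rho(s))\,ds+\int_{t_{1}}^{t_{2}}B(\rho(s))\,dW(s),
\end{equation*}
take $\|\cdot\|_{H_{2}^{-1}}^{2}$ and expectation, and split into drift and stochastic contributions. For the drift, the Cauchy--Schwarz inequality on $[t_{1},t_{2}]$ extracts a factor $|t_{2}-t_{1}|$, reducing the problem to bounding $\mathbb{E}\int_{0}^{T}\|\bar{A}(\rho)\|_{H_{2}^{-1}}^{2}\,ds$. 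The isometric isomorphism $\Delta:H_{0}^{1,2}(\mathcal{O})\to H_{2}^{-1}(\mathcal{O})$ from Lemma~2.1 gives $\|\Delta(|\rho|^{m-1}\rho)\|_{H_{2}^{-1}}=\||\rho|^{m-1}\rho\|_{H_{0}^{1,2}}\le Cm\,\||\rho|^{m-1}\nabla\rho\|_{L^{2}}$, which is precisely controlled in expectation by $R_{2}$. The chemotactic piece is handled as in \eqref{3.3}, yielding $\|\nabla\cdot(\xi\nabla\Phi\ast\xi)\|_{H_{2}^{-1}}\le C\|\xi\|_{L^{m+1}}^{2}$, and the $L^{4}$-in-time bound on $\xi$ already obtained in the verification of $(H3)$ gives the required $|t_{2}-t_{1}|$ factor. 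For the stochastic integral, It\^{o}'s isometry together with \eqref{normphi} yields
\begin{equation*}
\mathbb{E}\Big\|\int_{t_{1}}^{t_{2}}B(\rho)\,dW\Big\|_{H_{2}^{-1}}^{2}\le C\,\mathbb{E}\int_{t_{1}}^{t_{2}}\|\rho\|_{H_{2}^{-1}}^{2}\,ds\le CR_{1}|t_{2}-t_{1}|.
\end{equation*}

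The most delicate point is the handling of the degenerate nonlinear diffusion $\Delta(|\rho|^{m-1}\rho)$ in the $H_{2}^{-1}$-norm: it is essential that the identification $\|\Delta u\|_{H_{2}^{-1}}=\|u\|_{H_{0}^{1,2}}$ reduces it to an $L^{2}$-norm of $|\rho|^{m-1}\nabla\rho$, which is exactly the quantity whose expectation is controlled by $R_{2}$ inside $X_{T}$. Apart from this key identification, every remaining step is a routine consequence of the a~priori bounds on $\xi,\rho$ and the chemotactic estimate in Lemma~\ref{lem2.4}.
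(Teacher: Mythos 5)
Your treatment of part (ii) is correct but follows a genuinely different route from the paper. The paper applies It\^{o}'s formula to $\|\rho(t)-\rho_0\|_{H_2^{-1}}^2$, decomposes the result into three terms $K_1,K_2,K_3$, controls the stochastic one with the Burkholder--Davis--Gundy inequality, and in the end only establishes the increment bound from the initial time, namely $\mathbb{E}\big[\sup_{0\le s\le t}\|\rho(s)-\rho_0\|^2_{H_2^{-1}}\big]\le C(T,m,R_1,R_2)\,t$, rather than the stated estimate for arbitrary $0\le t_1<t_2\le T$. You instead start from the integral identity for $\rho(t_2)-\rho(t_1)$, use Cauchy--Schwarz on the drift and the It\^{o} isometry on the noise, and the decisive identification $\|\Delta(|\rho|^{m-1}\rho)\|_{H_2^{-1}}=\||\rho|^{m-1}\rho\|_{H_{0}^{1,2}}\le Cm\,\||\rho|^{m-1}\nabla\rho\|_{L^2}$ (the isometric-isomorphism lemma for $\Delta:H_0^{1,2}\to H_2^{-1}$ together with Poincar\'e), which is precisely the quantity whose space--time expectation is bounded by $R_2$. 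Your version delivers the estimate for general $t_1<t_2$ --- which is what the Ascoli--Arzel\`a argument actually requires --- and dispenses with BDG; the paper's version additionally gives a supremum in time but only measures the distance to $\rho_0$. One point you should make explicit: the $R_2$-bound you invoke is a bound on $\rho=\mathcal{T}\xi$, not on $\xi$, so it is available only because Proposition \ref{Prop2} guarantees $\mathcal{T}\xi\in X_T$; similarly the stochastic term uses $\mathbb{E}\sup_t\|\rho\|^2_{H_2^{-1}}\le R_1$ for $\rho$, not $\xi$.

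For part (i) the paper's written proof offers nothing to compare with (it only proves the increment estimate), and your computation is the natural one, but the final step does not close. Taking expectations in the It\^{o} identity and applying Young's inequality $\|\xi\|_{L^{m+1}}^4\le C(1+\|\xi\|_{L^{m+1}}^{m+1})$ followed by Gronwall yields
\begin{equation*}
\sup_{0\le t\le T}\mathbb{E}\|\rho(t)\|^{m+1}_{L^{m+1}}\le C(T,m)\Big(\mathbb{E}\|\rho_0\|^{m+1}_{L^{m+1}}+T+\mathbb{E}\Big[\int_0^T\|\xi\|^{m+1}_{L^{m+1}}\,dt\Big]\Big),
\end{equation*}
and the additive terms $\mathbb{E}\|\rho_0\|^{m+1}_{L^{m+1}}+T$ cannot be ``absorbed'' into a multiplicative constant in front of $\mathbb{E}\int_0^T\|\xi\|^{m+1}_{L^{m+1}}dt$: for $\xi\equiv 0\in X_T$ and $\rho_0\not\equiv 0$ the right-hand side of (i) vanishes while the left-hand side does not, so the inequality as literally stated with $C=C(T,m)$ is unattainable. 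This is a defect of the statement as much as of your argument, and the additive form you actually derive is sufficient for the compactness argument, but you should record that weaker form rather than claim the literal bound.
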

\begin{proof}
Using It\^{o} formula to $\|\rho(t)- \rho_0 \|_{{H_{2}^{-1}}}^{2}$, we obtain
\begin{align*}
&\frac{1}{2}\| \rho(t)- \rho_0 \|_{{H_{2}^{-1}}}^{2}\\=&\int_{0}^{t}{_{V^{*}}\langle \Delta (|\rho(s)|^{m-1}\rho (s)),\rho(s)- \rho_0 \rangle_{V}}ds-\chi\int_{0}^{t}{ _{V^{*}}\langle \nabla  \cdot (\xi( \nabla \Phi*\xi)(s)), \rho(s)- \rho_0 \rangle_{V}}ds\\&+\frac{1}{2}\int_{0}^{t} \|\sigma\rho(s) \|_{L_2(L^2(\mathcal{O}),H_{2}^{-1}(\mathcal{O}))}^{2}+\int_{0}^{t}\langle\rho(s)- \rho_0 ,\sigma \rho(s) dW(s) \rangle_{H_{2}^{-1}}\\
\leq&\int_{0}^{t}{_{V^{*}}\langle \Delta( |\rho(s)|^{m-1}\rho (s)- \rho_0 ^{m}),\rho(s)- \rho_0 \rangle_{V}}ds+\int_{0}^{t}{_{V^{*}}\langle \Delta \rho_{0}^{m},\rho(s)- \rho_0 \rangle_{V}}ds\\&-\chi\int_{0}^{t}{ _{V^{*}}\langle \nabla  \cdot (\xi( \nabla \Phi*\xi)(s)), \rho(s)- \rho_0 \rangle_{V}}ds+\frac{1}{2}\int_{0}^{t} \|\sigma\rho(s) \|_{L_2(L^2(\mathcal{O}),H_{2}^{-1}(\mathcal{O}))}^{2}\\&+\int_{0}^{t}\langle\rho(s)- \rho_0 ,\sigma \rho(s) dW(s)\rangle_{H_{2}^{-1}}\\\leq&-\int_{0}^{t}\| \rho(s)- \rho_0\|_{L^{m+1}}^{m+1}ds+\int_{0}^{t}{_{V^{*}}\langle \Delta \rho_{0}^{m},\rho(s)- \rho_0 \rangle_{V}}ds\\&-\chi\int_{0}^{t}{ _{V^{*}}\langle \nabla  \cdot (\xi( \nabla \Phi*\xi)(s)), \rho(s)- \rho_0 \rangle_{V}}ds+\frac{1}{2}\sum_{k=1}^{\infty } \int_{0}^{t}\sigma ^{2}\mu_{k}^2\|\rho (s)e_{k}\|_{{H_{2}^{-1}}}^{2}ds\\&+\sum_{k=1}^{\infty }\int_{0}^{t}\langle \rho(s)- \rho_0 ,\sigma \rho(s)e_{k} \rangle_{H_{2}^{-1}}d\beta_{k}(s).
\end{align*}
This immediately gives
\begin{align}\label{3.118}
	\nonumber&\frac{1}{2}\| \rho(t)- \rho_0 \|_{{H_{2}^{-1}}}^{2}+\int_{0}^{t}\| \rho(s)- \rho_0\|_{L^{m+1}}^{m+1}ds\\
    \nonumber\leq&\int_{0}^{t}{_{V^{*}}\langle \Delta \rho_{0}^{m},\rho(s)- \rho_0 \rangle_{V}}ds-\chi\int_{0}^{t}{ _{V^{*}}\langle \nabla  \cdot (\xi( \nabla \Phi*\xi)(s)), \rho(s)- \rho_0 \rangle_{V}}ds\\
   \nonumber &+\sum_{k=1}^{\infty }\int_{0}^{t}\langle \rho(s)- \rho_0 ,\sigma \rho(s)e_{k} \rangle_{H_{2}^{-1}}d\beta_{k}(s)+C \int_{0}^{t}\|\rho (s)\|_{{H_{2}^{-1}}}^{2}ds\\=:&K_1(t)+K_2(t)+K_3(t)+C \int_{0}^{t}\|\rho (s)\|_{{H_{2}^{-1}}}^{2}ds.
\end{align}
For $K_1(s)$, applying the H\"{o}lder inequality in time gives that
\begin{align}\label{3.119}
	\nonumber\mathbb{E}\Big[\sup _{0 \leq s \leq t}K_1(s)\Big]&\leq\mathbb{E}\Big[ \int_{0}^{t}\|\rho(s)- \rho_0\|_{L^{m+1}}\|\rho_0\|_{L^{m+1}}^{m}ds\Big]\\
    \nonumber&\leq\mathbb{E}\Big[ \Big(\int_{0}^{t}\|\rho(s)- \rho_0\|_{L^{m+1}}^{m+1}ds\Big) ^{\frac{1}{m+1}}\Big(\int_{0}^{t}\|\rho_0\|_{L^{m+1}}^{m+1}ds\Big)^{\frac{m}{m+1}} \Big]  \\
    \nonumber&\leq \mathbb{E}\Big[ \Big( \int_{0}^{t}\|\rho(s)- \rho_0\|_{L^{m+1}}^{m+1}ds\Big) ^{\frac{1}{m+1}}t^{\frac{m}{m+1}}\|\rho_0\|_{L^{m+1}}^{m} \Big]\\
    &\leq\varepsilon_1 \mathbb{E}\Big[ \int_{0}^{t}\|\rho(s)- \rho_0\|_{L^{m+1}}^{m+1}ds\Big]+C(\varepsilon_1,m)t\mathbb{E}\Big[ \|\rho_0\|_{L^{m+1}}^{m+1}\Big].
\end{align}

In order to calculate $K_2(s)$,  by (\ref{3.3}) and  the Young inequality, we get
\begin{align}\label{3.20}
\nonumber\mathbb{E}\Big[\sup _{0 \leq s \leq t}K_2(s)\Big]&\leq C(\chi)\mathbb{E}\Big[ \int_{0}^{t}\|\rho(s)- \rho_0\|_{{H_{2}^{-1}}}\|\xi\|_{L^{m+1}}^{2}ds\Big]\\&\leq\varepsilon_2\mathbb{E}\Big[ \int_{0}^{t}\|\rho(s)- \rho_0\|_{{H_{2}^{-1}}}^{2}ds\Big]+C(\varepsilon_2)t\mathbb{E}\Big[ \sup _{0 \leq s \leq t}\|\xi\|_{{L^{m+1}}}^{m+1}\Big]^\frac{4}{m+1}.
\end{align}

We now estimate $K_3(s)$, by the Burkholder-Davis-Gundy inequality 
\begin{align}\label{3.21}
\nonumber\mathbb{E}\Big[\sup _{0 \leq s \leq t}K_3(s)\Big]&\leq C\mathbb{E}\Big[ \Big( \int_{0}^{t}\|\rho(s)-\rho_0\|_{H_{2}^{-1}}^{2}\|\sigma\rho(s)\|_{L_2(L^2(\mathcal{O}),H_{2}^{-1}(\mathcal{O}))}^{2}ds\Big) ^\frac{1}{2}\Big]\\\nonumber& \leq C\mathbb{E}\Big[\Big (\sup_{0 \leq s \leq t}\|\rho(s)-\rho_0\|_{H_{2}^{-1}}^2\Big)^\frac{1}{2}\Big( \int_{0}^{t}\|\sigma\mu_k\rho(s)\|_{H_{2}^{-1}}^{2}ds\Big)^ \frac{1}{2}\Big]\\&\leq \frac{1}{4}\mathbb{E}\Big[\sup_{0 \leq s \leq t}\|\rho(s)-\rho_0\|_{H_{2}^{-1}}^{2} \Big] + C\sigma^{2}\mu_k^2t\mathbb{E}\Big[  \sup _{0 \leq s \leq t}\|\rho(s)\|_{H_{2}^{-1}}^{2}\Big].
\end{align}
By (\ref{3.119})-(\ref{3.21}), taking $\varepsilon_1,\varepsilon_2$ small enough and for any  $t\in[0,T]$, there exist a constant $C$ such that
\begin{align*}
&\frac{1}{4}\mathbb{E}\Big[\sup _{0 \leq s \leq t}\| \rho(s)- \rho_0 \|_{{H_{2}^{-1}}}^{2}\Big]+\mathbb{E}\Big[ \int_{0}^{t}\|\rho(s)- \rho_0\|_{L^{m+1}}^{m+1}ds\Big]\\\leq& C(\varepsilon_1,m)t\mathbb{E}\Big[ \|\rho_0\|_{L^{m+1}}^{m+1}\Big]+C(\varepsilon_2)t\mathbb{E}\Big[ \sup _{0 \leq s \leq t}\|\xi\|_{{L^{m+1}}}^{m+1}\Big]^\frac{4}{m+1}+C\sigma^{2}t\mathbb{E}\Big[  \sup _{0 \leq s \leq t}\|\rho(s)\|_{H_{2}^{-1}}^{2}\Big]\\\leq& C_1t\mathbb{E}\Big[ \|\rho_0\|_{L^{m+1}}^{m+1}\Big]+C_2tR_{2}^\frac{4}{m+1}+C_3tR_1\leq C( T, m, R_{1}, R_{2})t.
\end{align*} 
 This ﬁnishes the proof of the proposition.
\end{proof}
Basing on Proposition 3.1-3.4,  we apply the Schaduder fixed point theorem to get the existence of marginal solutions.

	\nocite{*}
	\bibliography{refer} 
	\bibliographystyle{plain}  

\end{document}